\crefname{enumi}{}{}
\crefname{property}{property}{properties}
\crefname{LEM}{Lemma}{the Lemmas}
\renewcommand{\PrintDOI}[1]{\doi{#1}}
\newtheorem{MAINTHM}{Theorem}
\newtheorem{UNRAVEL}[MAINTHM]{Problem}
\crefname{UNRAVELREF}{Problem}{Problems}
\newtheorem{THM}{Theorem}[section]
\newtheorem{LEM}[THM]{Lemma}
\newtheorem{COR}[THM]{Corollary}
\newtheorem{PROP}[THM]{Proposition}
\theoremstyle{definition}
\newtheorem{PROB}[THM]{Problem}
\newtheorem{PROPERTY}[THM]{Property}
\newcommand{\abs}[1]{\lvert#1\rvert}
\newcommand{\menge}[1]{\left\{#1\right\}}
\renewcommand{\phi}{\varphi}
\newcommand{\N}{\mathbb{N}}
\newcommand{\R}{\mathbb{R}}
\newcommand{\join}{\lor}
\newcommand{\meet}{\land}
\newcommand{\sub}{\subseteq}
\newcommand{\sm}{\smallsetminus}
\newcommand{\cX}{\mathcal{X}}
\title{The Unravelling Problem}
\author{Christian Elbracht \and Jakob Kneip \and Maximilian Teegen}
\begin{document}
\maketitle
\begin{abstract}
We identify and study a simple combinatorial problem that is derived from submodularity issues encountered in the theory of tangles of graphs and abstract separation systems.
\end{abstract}

\section{Introduction}
Here is an intriguingly simple combinatorial problem -- simple enough that you can explain it to a first-year student of mathematics -- but which is challenging nonetheless:
\begin{UNRAVEL}[Unravelling problem]
    \label[UNRAVELREF]{prop:fickle}
    A finite set $\cX$ of finite sets is \emph{woven} if, for all $X,Y\in\cX$, at least one of $X \cup Y$ and $X\cap Y$ is in $\cX$.
    Let $\cX$ be a non-empty woven set. Does there exist an $X \in \cX$ for which $\cX - X$ is again woven?
\end{UNRAVEL}
Given a set of subsets $\cX$ which is woven, an \emph{unravelling} of $\cX$ shall be a sequence $\cX=\cX_n\supseteq \dots\supseteq \cX_0=\emptyset$ of sets, all woven, such that $\abs{\cX_i\sm \cX_{i-1}}=1$ for all $1\le i\le n$. If the unravelling problem has a general positive answer, then every woven set will have an unravelling.

The question of whether every woven set has an unravelling arose naturally in our study of structurally submodular separation systems. These systems are a framework developed by Diestel, Erde and Weißauer \cite{AbstractTangles} to generalise the theory of tangles in graphs to an abstract setting, allowing the application of tangles in a multitude of contexts including graphs and matroids, but also other combinatorial structures.

In this paper we analyse the unravelling problem. We prove affirmative versions in two important cases, which come from the original context where the problem arose: submodular separation systems. Our first main result is that unravellings exist for sets $\cX$ that consist, for some submodular function $f$ on the subsets of $V=\bigcup \cX$, precisely of the sets $X\subseteq V$ with $f(X)<k$ for some integer $k$. Our second main result settles the unravelling problem for general finite posets, which we call \emph{woven} if they contain, for every two elements, either an infimum or a supremum of these two elements. %

We start in \cref{sec:prelim} with the definitions required for this paper and show that the unravelling problem has an equivalent formulation in terms of distributive lattices. We also establish a kind of converse of unravelling, showing that we can find, for every woven set $\cX$, some subset of $\bigcup \cX$ which we can add to $\cX$ and remain woven. 

In \cref{sec:history} we give a brief overview of the theory of abstract separation systems, the setting for general tangle theory, and explain how a version of the unravelling problem naturally arises in that context.
Throughout the paper, we will come back to the context of abstract separation systems, to discuss how our results apply there.
Apart from these discussions, however, our results are independent of \cref{sec:history}, so that the reader may opt to skip it and head directly to \cref{sec:unravelling}. 

There  we give a partial solution to the unravelling problem, by showing that the following class of woven sets, which is particularly important in the theory of tangles, can indeed be unravelled:
Let $\cX$ be a collection of subsets of some finite set $V$. If $\cX$ has the form $\cX=\{X\in 2^V\mid f(X)<k\}$ for some function $f:2^V\to \R$ and $k\in \R$, let us say that $f$ \emph{induces} $\cX$.

\begin{restatable}{MAINTHM}{cororderfun} If $\cX\subseteq 2^V$ is induced by a submodular function on $2^V$, then $\cX$ can be unravelled.
\end{restatable}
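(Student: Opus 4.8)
The plan is to show that $\cX = \{X \in 2^V : f(X) < k\}$ can be unravelled by identifying, at each step, a single set that can be removed while preserving wovenness. Since $f$ is submodular, $\cX$ is automatically woven: for any $X, Y \in \cX$, submodularity gives $f(X \cap Y) + f(X \cup Y) \le f(X) + f(Y) < 2k$, so one of $f(X \cap Y), f(X \cup Y)$ is strictly less than $k$, placing the corresponding set in $\cX$. The strategy is therefore to peel off sets one at a time, and the natural candidate to remove is a set on which $f$ attains its maximum over $\cX$ — intuitively, removing a ``topmost'' element should not destroy the closure-type property that wovenness encodes.

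First I would let $m = \max_{X \in \cX} f(X)$ and set $\cX' = \cX \setminus M$ for some $M$ with $f(M) = m$, chosen suitably among the $f$-maximal sets (for instance, maximal with respect to inclusion among them, or maximal/minimal in cardinality — the exact choice is where care is needed). Then I would verify wovenness of $\cX'$: given $X, Y \in \cX'$, by wovenness of $\cX$ at least one of $X \cup Y, X \cap Y$ lies in $\cX$; the only way $\cX'$ could fail is if the unique set landing in $\cX$ is exactly $M$ and the other set is not in $\cX$. So the crux is to rule out the configuration where $X, Y \in \cX \setminus M$, one of $X \cup Y, X \cap Y$ equals $M$, and the other has $f$-value $\ge k$. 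Here submodularity should force a contradiction: if, say, $X \cup Y = M$, then $f(X \cap Y) \le f(X) + f(Y) - f(M) = f(X) + f(Y) - m \le f(X) + f(Y) - \max(f(X),f(Y)) = \min(f(X),f(Y)) < k$, so $X \cap Y \in \cX$ and in fact $X \cap Y \ne M$ (as $f(X \cap Y) < k \le$ is fine, but we need $X \cap Y \neq M$, which holds since $f(X\cap Y) < k$ while we may need $m \geq k$ — if $m < k$ then $\cX = 2^V$ and the claim is trivial by a separate easy argument). The symmetric case $X \cap Y = M$ is where the inclusion/cardinality choice of $M$ enters: I would use that $M \subseteq X$ and $M \subseteq Y$ with $M \ne X, Y$, so that $X \cup Y \supsetneq M$, and then argue $X \cup Y \in \cX$ using maximality of $M$ together with a fresh application of submodularity, perhaps comparing $f(X \cup Y)$ against $f(X) + f(Y) - f(M)$ again.

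The main obstacle I anticipate is precisely the case $X \cap Y = M$: submodularity bounds $f(X \cup Y)$ by $f(X) + f(Y) - m$, which is only $< 2k - m$, and this need not be below $k$ unless $m \ge k$. One must extract more structure — likely that among all $f$-maximal sets one should pick $M$ to be, say, \emph{inclusion-minimal}, so that any $X, Y$ properly containing $M$ have $f(X), f(Y) \le m$ but, being strictly larger, their union $X \cup Y$ is comparable to other $f$-maximal sets in a way that keeps it inside $\cX$; alternatively one picks $M$ inclusion-\emph{maximal} and handles the $X \cup Y = M$ case via cardinality. Resolving this tension — choosing the right extremal $M$ so that \emph{both} the $X \cup Y = M$ and $X \cap Y = M$ obstructions vanish simultaneously — is the heart of the argument. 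Once a single removable $M$ is produced, the theorem follows by induction on $\abs{\cX}$, since $\cX'$ is again of the form $\{X : f'(X) < k\}$ for a submodular $f'$ — or, more simply, one just iterates the ``find a removable set'' step $\abs{\cX}$ times, each time re-invoking the same lemma with the restriction of $f$, to build the full unravelling $\cX = \cX_n \supseteq \dots \supseteq \cX_0 = \emptyset$.
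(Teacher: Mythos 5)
Your first step is sound and matches the paper's starting point: if $M$ maximises $f$ over $\cX$, then $\cX - M$ is woven, and your worry about the case $X\cap Y=M$ is unfounded — the very same inequality you used for $X\cup Y=M$ settles it, since $f(X\cup Y)\le f(X)+f(Y)-f(M)\le f(X)+f(Y)-\max(f(X),f(Y))=\min(f(X),f(Y))<k$ (here $m\ge f(X),f(Y)$ because $X,Y\in\cX$), and $X\cup Y\ne M$ automatically as $X\cup Y\supsetneq X\cap Y=M$. No inclusion-extremal choice of $M$ is needed for this single deletion step.

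The genuine gap is the iteration. You assert that $\cX'=\cX-M$ ``is again of the form $\{X : f'(X)<k\}$ for a submodular $f'$'', or alternatively that one can ``re-invoke the same lemma with the restriction of $f$''; neither is justified, and this is exactly where the real content of the theorem lies. If $f$ takes the value $m=\max_{X\in\cX}f(X)$ more than once, then $\cX-M$ is not a sublevel set of $f$, so the lemma does not apply again, and restricting $f$ to $\cX-M$ does not produce a submodular function on $2^V$ inducing $\cX-M$. Worse, greedily deleting arbitrary $f$-maximisers can genuinely get stuck: take $V=\{1,2\}$ and $f\equiv 0$ (submodular), $k=1$, so $\cX=2^V$; deleting $\{1,2\}$ and then $\emptyset$ — both legitimate maximisers at their respective steps under your scheme — leaves $\{\{1\},\{2\}\}$, which is not woven. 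So the order of deletions matters, and your proposal contains no rule producing a correct order, nor a proof that one exists. The paper closes precisely this gap by a tie-breaker argument: it constructs an injective submodular $\rho\colon 2^V\to\N$ (with the extra property that sums $\rho(p_1)+\rho(p_2)$ determine the pair), and replaces $f$ by $g=f+c\rho$ for $c>0$ small enough that $g$ is still submodular, injective, and induces the same $\cX$ at threshold $k$. Then \emph{every} initial segment of the $g$-increasing enumeration of $\cX$ is itself a sublevel set of $g$, hence woven, which yields the whole unravelling at once rather than by an induction whose hypothesis (order-inducedness) you cannot maintain. Your argument would become correct if you supplied such a perturbation (or some other proof that a suitable deletion order exists), but as written the inductive step is unsupported.
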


In \cref{sec:sandwich} we introduce a possible generalisation of the unravelling problem from subsets of some power set to subsets of any lattice.
We show that the lattice analogue of the unravelling problem can be answered in the negative for non-distributive lattices, constructing an explicit counterexample. However, if we restrict this generalised formulation of the unravelling problem to distributive lattices, it becomes equivalent to \cref{prop:fickle}. %

We conclude in \cref{sec:weak_unravel} with our second main result, a variant of the unravelling problem for general partially ordered sets. Let us call a finite poset $P$ \emph{woven} if there exists, for any $p,q\in P$, either a supremum or an infimium in $P$. A sequence $P=P_n\supseteq \dots \supseteq P_0=\emptyset$ of subposets is an \emph{unravelling} of $P$ if $P_i$ is woven and $\abs{P_i\sm P_{i-1}}=1$ for every $1\le i \le n$. Our second main result is that all woven posets have an unravelling:
\begin{restatable}{MAINTHM}{thmselfwoven}
Every woven poset can be unravelled.
\end{restatable}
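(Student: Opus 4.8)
The plan is to prove, by induction on $\lvert P\rvert$, that every non-empty woven poset has a \emph{removable} element -- an $x\in P$ for which $P-x$ is again woven. Since $P-x$ is then a smaller woven poset and the empty poset is vacuously woven, iterating such deletions produces an unravelling, so everything comes down to exhibiting one removable element.

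The cornerstone, which I would prove first, is the observation that \emph{if $x$ has a unique upper cover $x^{+}$ (or, dually, a unique lower cover), then $P-x$ is woven}. Deleting $x$ can change the supremum or infimum of a pair $\{a,b\}$ with $a,b\ne x$ only when that supremum or infimum was $x$ itself; if $a\vee b=x$, the upper bounds of $\{a,b\}$ in $P-x$ are exactly the elements strictly above $x$, whose least element is $x^{+}$, so $\{a,b\}$ recovers $x^{+}$ as its supremum; and no pair $\{a,b\}$ with $a,b\ne x$ can have $x$ as its infimum, since then $a,b>x$ would both lie above $x^{+}$, putting $x^{+}$ above the alleged infimum $x$. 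Thus $P-x$ remains woven -- and this needs nothing about the pairs lying below $x$, because when $x$ has a unique lower cover no pair below $x$ can have $x$ as its supremum in the first place. (A maximal element that is the supremum of no infimum-less pair is removable by the same analysis, which would cover any case where elements with a unique cover happen to be absent.)

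It therefore suffices to show that every woven poset $P$ with $\lvert P\rvert\ge 2$ contains an element with exactly one upper cover or exactly one lower cover. I would argue by contradiction, assuming every element has $0$ or at least $2$ upper covers and $0$ or at least $2$ lower covers, using two facts about woven posets: every minimal element lies below every maximal element (otherwise an incomparable minimal/maximal pair has neither a supremum nor an infimum), and every down-set $\{y:y\le a\}$ and up-set $\{y:y\ge a\}$ is again woven. Take $x$ minimal \emph{among the non-minimal elements} of $P$; then all of $x$'s lower covers $m_{1},\dots,m_{k}$ ($k\ge 2$) are minimal in $P$, and for $i\ne j$ we get $m_{i}\vee m_{j}=x$, since $\{m_i,m_j\}$ has no infimum and hence a supremum, which is $\le x$ and strictly above $m_i$, forcing it to equal $x$ as $m_i\lessdot x$. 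This already rules out $x$ being maximal: if it were, then $P$ must have a second maximal element $M$ -- otherwise $x=\top$ is the only maximal element, and each minimal element, having at least two upper covers of which at most one is $\top$, would have an upper cover strictly below $\top$ and thus fail to be a lower cover of $\top$, contradicting $m_1\lessdot x=\top$ -- and then the infimum $x\wedge M$ (which exists, as $x$ and $M$ share no upper bound) lies above both $m_1$ and $m_2$, hence strictly above $m_1$, hence equals $x$ because $m_1\lessdot x$; but then $x\le M$, contradicting the maximality of both.

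The remaining case -- $x$ non-maximal, and therefore with at least two upper covers -- is where I expect the real work and the main obstacle: one must again exploit the suprema that wovenness forces, now among the upper covers of $x$ and the elements above them, in combination with the identities $m_i\vee m_j=x$, to derive a contradiction, or else to locate directly an element with a unique cover. Since ``woven'' is self-dual, this analysis may be carried out in $P$ or in $P^{\mathrm{op}}$. Once a removable $x$ is found, deleting it and applying the induction hypothesis to $P-x$ completes the proof.
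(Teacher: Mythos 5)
Your deletion step is sound: the observation that removing an element with a unique upper (or lower) cover preserves wovenness is exactly the paper's \cref{lem:delete_deg1}, and your overall plan (find such an element, delete it, induct) is the paper's plan as well. The problem is that the theorem's real content is the \emph{existence} of such an element in every woven poset (the paper's \cref{lem:unique_cover}), and your argument for this is not complete. After choosing $x$ minimal among the non-minimal elements, showing its lower covers are minimal, pairwise join-less-free and pairwise join to $x$, and excluding the case that $x$ is maximal, you explicitly leave open the case that $x$ is non-maximal with at least two upper covers, calling it ``the real work and the main obstacle''. That is precisely the gap: nothing in the local configuration around such an $x$ obviously produces a contradiction or a uniquely covered element, and it is not clear that a contradiction scheme of the form ``no element has a unique cover'' can be closed by local reasoning near the bottom of the poset at all.

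The paper's proof of the existence lemma is global rather than local, and works from the maximal elements downwards. It first shows (\cref{lem:infimum}) that every non-empty subset $M'$ of the set $M$ of maximal elements has an infimum in $P$ (induction on $\abs{M'}$, using that a maximal element and the current infimum can only have a supremum if they are comparable). It then classifies each point of $P$ by the set $d(M')$ of points lying below exactly the maximal elements in $M'$. If some class $d(M')$ has at least two points, take $M'$ subset-minimal with this property; then a maximal element $x$ of $d(M')-\inf M'$ has $\inf M'$ as its \emph{unique} upper cover (\cref{lem:max_upper}), because any other element above $x$ lies in some $d(M'')$ with $M''\subsetneq M$, hence equals $\inf M''\ge\inf M'$ and so is not a cover. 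If instead every class is a singleton, then every point of $P$ is $\inf M'$ for its class, $\inf M$ is the bottom of $P$, and any upper cover of the bottom has precisely one lower cover. Some argument of this global kind is what your proposal still needs before the induction can run; as written, the proof is not complete.
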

Wovenness in posets corresponds to the most general notion of structural submodularity for separation systems, which we also discuss in \cite{Structuresubmodular}. There we also present 
results about submodularity in separation systems which arose in our research on the unravelling problem.

\section{Preliminaries}\label{sec:prelim}
We will formulate a problem equivalent to \cref{prop:fickle} in terms of lattices.
This problem might be easier to work with, and will also allow us to explain how unravelling problem originally came up. But more on that in \cref{sec:history}. Let us begin by recalling some basic terminology from lattice theory.
We largely follow the notation of \cite{LatticeBook}, however we will tacitly assume that all the sets we consider are finite.

A \emph{lattice} is a non-empty partially ordered set (or `poset') $L$ in which any two elements $a,b\in L$ have a supremum and an infimum, that is, there is a unique element $a\join b$ (their \emph{join} or \emph{supremum}) minimal such that $a\le a\join b$ and $b\le a\join b$ and a unique element $a\meet b$ (their \emph{meet} or \emph{infimum}) maximal such that $a\ge a\meet b$ and $b\ge a\meet b$.

Every (finite) lattice has a greatest (\emph{top}) element and a least (\emph{bottom}) element, that is, an element $t\in L$ with $a\le t$ for every $a\in L$ and an element $b\in L$ with $b\le a$ for every $a\in L$. 

A lattice is \emph{distributive} if it satisfies the distributive laws $a\join (b\meet c)=(a\join b)\meet (a\join c)$ and $a\meet (b\join c)=(a\meet b)\join (a\meet c)$ for all $a,b,c\in L$. 

Given a poset $P$ and $a,b\in P$ we say that $a$ is a \emph{upper (lower) cover} of $b$ if $a>b$ ($a<b$) and there does not exists any $c\in P$ such that $a>c>b$ ($a<c<b$).

The power set of any given set $V$ forms a lattice (the so-called \emph{subset lattice}) with the partial order on $2^V$ given by the subset relation.
In this lattice, the join of two subsets of $V$ is their union and the meet is their intersection.
Using this, we can formulate a lattice-theoretical variation of the unravelling problem. If $L$ is a lattice, we say that $P\subseteq L$ is \emph{woven} in $L$ if for any $p,q\in P$ we have $p\join q\in P$ or $p\meet q\in P$.

We can now generalise \cref{prop:fickle} as follows:
\begin{PROB}
    \label{prop:woven_lattice}
   Let $L$ be a finite lattice and $P\subseteq L$ a non-empty woven subset. Does there exist $p \in P$ for which $P - p$ is again woven?
\end{PROB}
For distributive lattices, \cref{prop:woven_lattice} is equivalent to \cref{prop:fickle} by Birkhoff's representation theorem (see \cite{LatticeBook}), which says that every finite distributive lattice is isomorphic to a sublattice of the subset lattice of some finite set. For general lattices, however, we have a negative solution to \cref{prop:woven_lattice}: in \cref{sec:sandwich} we shall construct a (non-distributive) counterexample for \cref{prop:woven_lattice}.

Perhaps surprisingly, it is easy to establish a kind of converse to \cref{prop:woven_lattice}: given a lattice $L$ and a woven poset $P\subseteq L$ we can always find a~$ p\in L\sm P $ which one can~\emph{add} to~$ P $ while keeping it woven.
\begin{PROP}\label{prop:ravel}
	If~$ L $ is a lattice and $ P\subsetneq L $ a proper woven subset of $L$, then there is a $p\in L\sm P$ such that~$ P+p $ is again woven.
\end{PROP}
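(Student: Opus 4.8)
The plan is to take $p$ to be a maximal element of the poset $L\sm P$; such an element exists because $L$ is finite and $L\sm P$ is non-empty, the latter since $P$ is a proper subset. I claim this $p$ works. Since $P$ itself is woven, the only pairs in $P+p$ whose wovenness condition is not already guaranteed are those of the form $\{p,q\}$ with $q\in P$, together with the pair $\{p,p\}$; the latter is trivial because $p\join p=p\meet p=p\in P+p$. So the whole statement reduces to showing: for every $q\in P$, at least one of $p\join q$ and $p\meet q$ lies in $P+p$.

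For this, fix $q\in P$ and consider $p\join q$, which satisfies $p\join q\ge p$. If $p\join q=p$, then $p\join q=p\in P+p$ and we are done. Otherwise $p\join q>p$; since $p$ was chosen maximal in the poset $L\sm P$, no element of $L$ strictly above $p$ can lie in $L\sm P$, so $p\join q\in L=P\cup(L\sm P)$ forces $p\join q\in P\sub P+p$. In either case $p\join q\in P+p$, as required, which shows that $P+p$ is woven.

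I do not expect a genuine obstacle here: the argument is short and uses only the finiteness of $L$ (a standing assumption of the paper) to produce a maximal element of the complement. It is worth remarking that the construction is self-dual: choosing $p$ to be a \emph{minimal} element of $L\sm P$ instead and running the same argument with meets in place of joins yields $p\meet q\in P+p$ for all $q\in P$, giving a second valid choice of element to add. Either formulation gives the proposition; I would present the join version as the main line and mention the dual in a remark.
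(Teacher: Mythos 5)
Your proposal is correct and is essentially the paper's own argument: the paper also takes $p$ to be a maximal element of $L\sm P$ and notes that maximality forces $p\join q\in P+p$ for every $q\in P$ (your case split on whether $p\join q=p$ just makes this explicit). The dual remark about choosing a minimal element is a fine additional observation but not needed.
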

\begin{proof}
	Let~$ p $ be a maximal element of~$ L\sm P$. Then~$ P'\coloneqq P+r $ is woven: for each~$ 
	q\in P' $ we have~$ (p\join q)\in P' $ by the maximality of~$ p $ in~$ L\sm P $.
\end{proof}
In terms of woven sets in the sense of \cref{prop:fickle}, this statement directly implies the following:
\begin{COR}\label{cor:ravel_sets}
	If~$ V $ is a finite set and $\cX\subsetneq 2^V $ woven, then there is a $X\subseteq V$ such that $X\notin \cX$ and such that $\cX+X$ is again woven.
\end{COR}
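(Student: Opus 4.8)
The plan is to obtain \cref{cor:ravel_sets} as an immediate instance of \cref{prop:ravel}, the only work being a translation between the two meanings of ``woven'' appearing in the paper. The first step is to recall, as already observed in the preliminaries, that the power set $2^V$ ordered by inclusion is a finite lattice with $X\join Y=X\cup Y$ and $X\meet Y=X\cap Y$. The second step is to note that, under this identification, a subset $\cX\subseteq 2^V$ is woven in the sense of \cref{prop:fickle} --- that is, for all $X,Y\in\cX$ at least one of $X\cup Y$ and $X\cap Y$ lies in $\cX$ --- exactly when $\cX$ is woven in the lattice $2^V$ in the sense used in \cref{prop:ravel}. After substituting $\join=\cup$ and $\meet=\cap$, these are word for word the same condition, so no argument is needed here beyond pointing it out.

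With these identifications in place the corollary follows at once: given a proper woven $\cX\subsetneq 2^V$, \cref{prop:ravel} applied with $L\coloneqq 2^V$ and $P\coloneqq\cX$ produces an element $X\in 2^V\sm\cX$ for which $\cX+X$ is woven in $2^V$, and hence woven in the sense of \cref{prop:fickle}. If one prefers a self-contained derivation rather than a black-box appeal, one can simply repeat the proof of \cref{prop:ravel} in this setting: take $X$ to be an inclusion-maximal element of $2^V\sm\cX$, and observe that for every $Y\in\cX+X$ the set $X\cup Y$ contains $X$, so either $X\cup Y=X\in\cX+X$ (when $Y\subseteq X$) or $X\cup Y\supsetneq X$, whence $X\cup Y\notin 2^V\sm\cX$ by maximality of $X$, i.e.\ $X\cup Y\in\cX\subseteq\cX+X$. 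In both cases $X\cup Y\in\cX+X$, so $\cX+X$ is woven.

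I do not expect any genuine obstacle here; the statement is essentially a corollary by definition-unwinding. The only point requiring a moment's care is that \cref{cor:ravel_sets} fixes the ambient lattice as $2^V$ for the given set $V$ rather than as $2^{\bigcup\cX}$, but this is harmless since ``$\cX\subsetneq 2^V$ woven'' is exactly the hypothesis under which \cref{prop:ravel} is being invoked, and the witness $X$ produced is automatically a subset of $V$ with $X\notin\cX$.
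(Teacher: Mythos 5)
Your proposal is correct and matches the paper's intent exactly: \cref{cor:ravel_sets} is obtained by applying \cref{prop:ravel} to the subset lattice $L=2^V$ with $P=\cX$, the identification of the two notions of wovenness being immediate since join and meet in $2^V$ are union and intersection. Your optional self-contained argument (take $X$ inclusion-maximal in $2^V\sm\cX$) is precisely the proof of \cref{prop:ravel} specialised to this lattice, so nothing differs in substance.
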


\section{Abstract separation systems and submodularity}
\label{sec:history}
In this section we briefly describe the evolution of the theory of abstract separation systems and how they relate to the unravelling problem.

This theory originates in work by Robertson and Seymour almost thirty years ago, where they introduced, as part of their graph minor project \cite{GMX} a new tool to capture indirectly what `highly connected regions' of a graph are: so called \emph{tangles}. This concept of tangles has, over the years, been generalised to many other contexts, see for example \cites{ProfilesNew,AbstractTangles,TangleTreeAbstract,TangleTreeGraphsMatroids,ProfileDuality,FiniteSplinters,TanglesInMatroids,AbstractSepSys}. 

In this process of generalisation, concrete separations of a graph were replaced by a more general objects, called an abstract separation system, which generalises not only the notion of separations of a graph, but also of other combinatorial objects, for example matroids \cite{AbstractSepSys}. Formally, an abstract separation system $\vS$ is a partially ordered set with an order-reversing involution $^\ast$.
The elments $\vs \in \vS $ are called \emph{(oriented) separations} and we write $\sv$ to mean the \emph{inverse} $\vs^*$ of $\vs$. The set $\{\vs,\sv\}$ of a separation together with its inverse is also denoted as the \emph{unoriented separation} $s$, and we write $S$ for the set of all these unoriented separations. Conversely, given a set $S$ of unoriented separations we denote as $\vS$ the set of oriented separations $\vs$ for which $s\in S$.

For simplicity, we will use definitions of oriented and unoriented separations interchangeably, as long as the meaning is clear. 

 This concept of abstract separation systems was developed to have the bare minimum of structure needed to make the two fundamental theorems of tangle theory, the tangle-tree-duality \cite{TangleTreeAbstract,AbstractTangles} and the tree-of-tangles theorem \cite{ProfilesNew,AbstractTangles} work. Proving theorems, such as these two, in this simple and abstract setting is a way to obtain results in a variety of contexts, with just one unified proof.

In order for the two theorems to actually hold in this abstract setting, one needs the separation system to be `rich enough' in some sense. Historically this `richness' was ensured via so-called \emph{submodular order functions}: we require that the considered separation system is contained in some \emph{universe $\vU$ of separations}, 
a separation system $\vU$ which is a lattice. A function $f\colon \vU \to \R^+_0$ is then said to be a \emph{submodular order function} if $f$ is symmetric, i.e. $f(\vs)=f(\sv)$ for every $\vs\in \vU$ and $f$ satisfies $f(\vs)+f(\vt)\ge f(\vs\join \vt)+f(\vs\meet\vt)$ for any $\vs,\vt\in \vU$. Historically, the only separation systems considered where then the $\vS_k$'s, separation systems of the form $\vS_k:=\{\vs\in \vU\mid f(\vs)<k\}$ for some $k\in \R$.

Such a universe $\vU$ of separations, as well as such a submodular order function, naturally arise when considering separations of graphs.\footnote{The universe of separations of a graph $G$ is given by the set of all separations $(A,B)$ of $G$, and the considered order function is given by $f((A,B))=\abs{A\cap B}$}

However, in~\cite{AbstractTangles}, Diestel, Erde and Weißauer found a weaker structural condition, which still provided enough `richness' for the two main theorems to hold: the property of \emph{structural submodularity}. A separation system $\vS$ inside a universe $\vU$ of separations is said to be \emph{structurally submodular in $\vU$} if $\vs\join \vt\in \vS$ or $\vs\meet\vt\in \vS$ whenever $\vs,\vt\in \vS$. The separation $ \vr\join\vs $ and~$ \vr\meet\vs $ are then called the \emph{corners} of $\vs$ and $\vt$.
This definition no longer relies on a submodular order function and is, in many arguments, the only necessary property, as \cite{AbstractTangles} demonstrated. Moreover, given a submodular order function $f$, every set $\vS_k$ is structurally submodular in $\vU$ due to the fact that $f$ is submodular. However, as we show in \cite{Structuresubmodular}, the converse is not true, so structurally submodular separation systems are a strictly larger class of separation systems.
The most abstract versions of the two main theorems of abstact tangle theory are formulated in the context of these structural submodular separation systems \cites{AbstractTangles,FiniteSplinters,CanonicalToT,ToTviaTTD}.

Also our original motivation for considering the unravelling problem originates in these structural submodular separation systems: Given such a structural submodular separation system $\vS$ inside a universe $\vU$ of separations, it might be possible that we find a separation $\vs$ inside $\vS$ which we can delete, together with its inverse, and be left with a separation system $\vS\sm \{\vs,\sv\}$ that is again structurally submodular in $\vU$. Formally, given a structurally submodular separation system $\vS$ inside a universe $\vU$ of separations, we are interested if the following property holds:
\begin{PROPERTY}\label{assertion:unravelling} There is an $\vs\in S$ such that $ \vS\sm \{\vs,\sv\} $ is submodular in~$\vU$.
\end{PROPERTY}
If this were to hold for all structurally submodular separation systems, then we could recursively apply this reduction step to \emph{unravel} such a separation system, i.e.\ we would obtain a sequence ${\emptyset=\vS_1\subseteq \vS_2\dots\subseteq \vS_n=\vS}$ of structurally submodular separation systems such that, for every $i<n$, we have that $\vS_{i+1}\sm \vS_i$ consists of just one separation $\vs_i$ together with its inverse. Such an unravelling sequence would be of particular use for proving theorems about structurally submodular separation systems via induction. For example, it is possible to obtain a short proof of a tree-of-tangles theorem for structurally submodular separation systems via this unravelling sequence \cite{kneip2020tangles}*{Section 4.1.8}.

This question, whether \cref{assertion:unravelling} holds for every structurally submodular separation systems, is now closely related to \cref{prop:woven_lattice}. In fact, if we could unravel every structurally submodular separation system, we could answer \cref{prop:woven_lattice} positively: If there exists a woven poset $P$ inside a lattice $L$, such that $P-p$ is not woven, we could construct a structurally submodular separation system inside a universe $\vU$ of separations which can not be unravelled. We use such a construction in \cref{sec:sandwich} to turn our counterexample to \cref{prop:woven_lattice} into an example of a structurally submodular separation system inside a non-distributive lattice which cannot be unravelled.

Also, the converse of \cref{prop:woven_lattice} established in \cref{prop:ravel} directly translates to a similar statement about structurally submodular separation systems inside a universe of separations.
\begin{COR}\label{cor:ravel_uni}
	If~$ U $ is a universe of separations and $ S\subsetneq U $ submodular in $U$, then so is~$ S+r $ for some~$ r\in U\sm S $.
\end{COR}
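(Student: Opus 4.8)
This essentially follows from \cref{prop:ravel}, and the plan is to adapt its one‑line proof. Recall that a universe of separations is in particular a lattice, and that a separation system $\vS\subseteq\vU$ being \emph{submodular in $\vU$} means precisely that $\vS$ is woven in the lattice $\vU$ in the sense of \cref{sec:prelim}. The only genuinely new point compared with \cref{prop:ravel} is that separation systems are closed under the involution $^\ast$, so that in order to add an \emph{unoriented} separation $r$ to $S$ we must add both of its orientations $\vr$ and $\rv$, and then verify wovenness of $\vS\cup\{\vr,\rv\}$ rather than merely of $\vS\cup\{\vr\}$.

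To do this I would first pick $\vr$ to be a maximal element of $\vU\sm\vS$, which is non‑empty since $S\subsetneq U$. Since $\vS$ is closed under $^\ast$ and $^\ast$ is order‑reversing, $\vU\sm\vS$ is closed under $^\ast$ as well, and hence $\rv$ is a \emph{minimal} element of $\vU\sm\vS$. Set $r\coloneqq\{\vr,\rv\}$; then $r\notin S$ because $\vr\notin\vS$, and the claim is that $\vS'\coloneqq\vS\cup\{\vr,\rv\}$ is woven in $\vU$, i.e.\ that $S+r$ is submodular in $U$.

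It then remains to check wovenness of $\vS'$, for which I would take $\vs,\vt\in\vS'$ and run through the cases according to how $\{\vs,\vt\}$ meets $\{\vr,\rv\}$. If $\vs,\vt\in\vS$ we are done since $\vS$ is submodular. If one of $\vs,\vt$ equals $\vr$, then the corner $\vs\join\vt$ satisfies $\vs\join\vt\ge\vr$, so either $\vs\join\vt=\vr\in\vS'$ or else $\vs\join\vt>\vr$, and then maximality of $\vr$ in $\vU\sm\vS$ forces $\vs\join\vt\in\vS\subseteq\vS'$. If neither of $\vs,\vt$ equals $\vr$ but one of them equals $\rv$, then dually $\vs\meet\vt\le\rv$, so either $\vs\meet\vt=\rv\in\vS'$ or else $\vs\meet\vt<\rv$, whence $\vs\meet\vt\in\vS\subseteq\vS'$ by minimality of $\rv$ in $\vU\sm\vS$. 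In every case a corner of $\vs$ and $\vt$ lies in $\vS'$, and the degenerate possibility $\vr=\rv$ causes no trouble. I expect no real obstacle here: the one thing to keep in mind is that it is exactly the $^\ast$‑closure of $\vS$ that converts the maximality of $\vr$ into the minimality of $\rv$, so that the join‑side argument for $\vr$ is mirrored verbatim by a meet‑side argument for $\rv$.
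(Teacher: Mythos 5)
Your proof is correct and takes essentially the same route as the paper: choose $\vr$ maximal in $\vU\sm\vS$ and adjoin the unoriented separation $r$. The paper at this point simply cites \cref{prop:ravel}, whereas you additionally verify that adding the inverse $\rv$ is harmless -- using that $\vU\sm\vS$ is closed under $^\ast$, so $\rv$ is minimal there and the meet-side argument mirrors the join-side one -- which is exactly the detail the paper's one-line proof leaves implicit.
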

\begin{proof} Let~$ \vr $ be a maximal element of~$ \vU\sm\vS $. By \cref{prop:ravel}, the separation system ~$ S'\coloneqq S+r $ is again submodular in $U$.
\end{proof}

 \section{Unravelling order-induced sets}%
\label{sec:unravelling}

In this section we show that for a subclass of the woven subsets of a lattice we indeed have unravellings.

For this let us say that a set $P$ inside a lattice $L$ is \emph{order-induced} if there exists a submodular function $f:L\to \R_0^+$ and a real number $k$ such that $P=\{p\in L\mid f(p)<k\}$. Here, $f$ beeing submodular should mean that $f(p)+f(q)\ge f(p\join q)+f(p\meet q)$ for any $p,q\in L$. Note that every order-induced set $P$ is woven, as the submodularity of $f$ implies that at least one of $f(p\join q),f(p\meet q)$ is at most $\max\{f(p),f(q)\}$ and thus at least one of $p\join q$ and $p\meet q$ lies in $P$, whenever both $p$ and $q$ lie in $P$.
However, there do exists woven sets which are not order-induced, see \cite{Structuresubmodular}.

We will see in what follows that for order-induced posets $P$ it is possible to find an \emph{unravelling}, that is a sequence $P=P_n\supseteq \dots \supset P_0=\emptyset$ of posets which are woven in $L$ such that $\abs{P_i\sm P_{i-1}}=1$ for every $1\le i\le n$.%

We say that $P$ can be \emph{unravelled} if there exists an unravelling for $P$.
In other words~$ P $ can be unravelled if we are able to successively delete elements from~$ P $ until we reach the empty set and maintain the property of being woven throughout.

We shall demonstrate that every order-induced subset of a lattice can be unravelled.

\begin{THM}\label{thm:unravelling}
	Let~$ L $ be a lattice with a submodular function~$ f $ and consider the subset $ P=\{p\in L\mid f(p)<k\} $ for some~$ k $. Then~$ P $ can be unravelled.
\end{THM}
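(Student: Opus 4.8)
The plan is to argue by induction on $\abs{P}$. If $P=\emptyset$ there is nothing to prove, so suppose $P\neq\emptyset$ and let $\lambda$ be the largest value $f$ attains on $P$. Since $\lambda<k$, the whole level set $\Lambda\coloneqq\{x\in L\mid f(x)=\lambda\}$ lies inside $P$, so $P=\{x\in L\mid f(x)\le\lambda\}$ and $P$ is the disjoint union of $\Lambda$ and $P'\coloneqq\{x\in L\mid f(x)<\lambda\}$. The set $P'$ is again induced by a submodular function (namely $f$, with threshold $\lambda$) and $\abs{P'}<\abs{P}$, so the induction hypothesis supplies an unravelling of $P'$. It therefore suffices to delete the elements of $\Lambda$ from $P$, one at a time, so that the resulting set stays woven at each step and ends at $P'$; concatenating such a deletion sequence with an unravelling of $P'$ produces one for $P$.

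Everything hinges on the order in which $\Lambda$ is deleted, and my proposal is to enumerate $\Lambda=\{n_1,\dots,n_t\}$ along a linear extension of the partial order $L$ induces on $\Lambda$, smaller elements first (so that $n_i<n_j$ in $L$ forces $i<j$), and to delete $n_1$, then $n_2$, and so on. Setting $R_j\coloneqq P'\cup\{n_{j+1},\dots,n_t\}$, so that $R_0=P$ and $R_t=P'$, the claim is that each $R_j$ is woven, proved by a further induction on $j$. The case $j=0$ is the (already observed) fact that an order-induced set is woven. For the step, assume $R_{j-1}$ woven and suppose $R_j=R_{j-1}-n_j$ is not, with witnessing pair $a,b\in R_j$, necessarily distinct. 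As $R_{j-1}$ is woven, one of $a\join b,a\meet b$ lies in $R_{j-1}$; since neither lies in $R_j$, that corner lies in $R_{j-1}\sm R_j=\{n_j\}$, so it equals $n_j$, and the other corner then lies outside $R_{j-1}$.

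Two cases remain, each closed by a single use of submodularity and of the chosen enumeration. If $a\join b=n_j$, then $a,b<n_j$; no element of $\{n_{j+1},\dots,n_t\}$ lies below $n_j$, so $f(a),f(b)<\lambda$, whence $f(a\meet b)\le f(a)+f(b)-f(n_j)<\lambda$ and $a\meet b\in P'\subseteq R_{j-1}$ --- contradicting that the other corner lies outside $R_{j-1}$. If $a\meet b=n_j$, then $a,b>n_j$, hence $a\join b>n_j$; this corner cannot have $f$-value $\lambda$ (it would then be an element of $\Lambda$ already deleted yet lying above $n_j$, against the enumeration) and cannot have $f$-value $<\lambda$ (it would then lie in $P'\subseteq R_{j-1}$), so $f(a\join b)>\lambda$; then $f(a)+f(b)\ge f(a\join b)+f(n_j)>2\lambda$ forces $f(a)>\lambda$ or $f(b)>\lambda$, impossible in $R_j\subseteq\{x\mid f(x)\le\lambda\}$. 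This completes both inductions.

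I expect the main obstacle to be precisely the choice of this deletion order. One checks easily that deleting a single element of $P$ of maximal $f$-value leaves $P$ woven, but iterating this naively fails: after some deletions a still-present pair $a,b$ can be left with neither corner in the set --- one corner having been deleted and the other never present --- so wovenness breaks. Deleting within each level set along a linear extension of the lattice order is the one genuinely non-obvious ingredient that rules this out; it is also the only place submodularity is used, and notably positivity of $f$ is never needed (the argument works for any submodular $f\colon L\to\R$).
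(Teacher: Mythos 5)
Your proof is correct, but it takes a genuinely different route from the paper's. The paper proves the theorem by perturbing $f$: it first constructs an injective submodular tie-breaker $\rho\colon L\to\N$ (\cref{thm:tiebreaker}) with the extra property that sums $\rho(p_1)+\rho(p_2)$ determine the pair, sets $g=f+c\rho$ for a small $c>0$, and then reads off the unravelling by ordering $P$ by increasing $g$-value, so that every intermediate set is of the form $\{p\in L\mid g(p)<g(p_{i+1})\}$ and hence itself order-induced. You instead induct on $\abs{P}$, strip off the top level set $\Lambda=\{x\mid f(x)=\lambda\}$ along a linear extension of the order on $\Lambda$ (smallest first), and use submodularity directly in the two cases to show each intermediate set stays woven, before recursing on $P'=\{x\mid f(x)<\lambda\}$; your case analysis is complete, since the corner removed at step $j$ must be exactly $n_j$ and the linear-extension choice correctly excludes the other corner lying in $P'$, in $\Lambda$, or above level $\lambda$. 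What your argument buys is elementarity and slightly greater generality: no tie-breaker construction, no perturbation constant, and no need for $f\ge 0$. What the paper's argument buys is a stronger invariant -- every intermediate set in its unravelling is again order-induced, so \cref{lem:delete-highest-order} applies throughout -- and a reusable tool, the injective submodular function of \cref{thm:tiebreaker}, which the paper exploits again for universes of separations in \cref{cor:tiebreaker_uni,cor:unravelling_uni}. One small inaccuracy in your closing remark (not affecting the proof): when one greedily deletes maximal-$f$ elements in an arbitrary order, the failure mode is not ``one corner deleted, the other never present'' -- submodularity rules that out by exactly the computation in your two cases -- but rather that \emph{both} corners of a still-present pair may already have been deleted within the same level set, as happens for $f\equiv 0$ on $2^{\{1,2\}}$ when $\emptyset$ and $\{1,2\}$ are removed first; this is precisely what your linear-extension ordering prevents.
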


For the remainder of this section let~$ L $ be a lattice with a submodular order function~$f$ and~$ P\sub L $. It is easy to see that we can perform the first step of an unravelling sequence:

\begin{LEM}\label{lem:delete-highest-order}%
	If~$ P=\{p\in L\mid f(p)<k\} $ and~$ p\in P $ maximises~$ f(p) $ in~$ S $, then~$ P-p $ is  woven in $L$.
\end{LEM}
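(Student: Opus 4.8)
The plan is to take two arbitrary elements $a,b\in P-p$ and exhibit a corner of $a$ and $b$ lying in $P-p$. Since $P$ is itself woven (being order-induced), we already know at least one of $a\join b$ and $a\meet b$ lies in $P$; the only thing that could go wrong is that the \emph{sole} corner of $a$ and $b$ lying in $P$ happens to be the deleted element $p$. So the whole content of the lemma is ruling this out, and I would do that with a one-line submodularity estimate.

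First I would dispose of the degenerate case $a=b$, where $a\join b=a\meet b=a\in P-p$ and there is nothing to show. So assume $a\ne b$; then $a\meet b<a\join b$ are distinct, hence at most one of them can equal $p$. Consequently, if \emph{both} corners lie in $P$, the other one lies in $P-p$ and we are done. The only interesting case is that exactly one corner $c\in\{a\join b,\,a\meet b\}$ lies in $P$, while the other corner $d$ satisfies $f(d)\ge k$.

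In that case I would invoke submodularity of $f$: $f(c)+f(d)\le f(a)+f(b)$, so
$f(c)\le f(a)+f(b)-f(d)\le f(a)+f(b)-k<f(b)$,
where the last strict inequality uses $f(a)<k$, which is precisely the statement that $a\in P$. Since $b\in P$ and $p$ maximises $f$ over all of $P$, this gives $f(c)<f(b)\le f(p)$, so $c\ne p$, and therefore $c\in P-p$, as desired.

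I do not expect a real obstacle here; the proof is essentially that single inequality chain wrapped in a small case split. The two places where care is needed are that the strict inequality in the definition of $P$ (namely $f(a)<k$, not $\le$) is what lets us strictly beat $f(p)$, and that $p$ must be a maximiser of $f$ on the \emph{whole} of $P$ — a maximiser over a smaller subset would not be enough. Both are exactly the hypotheses we are given.
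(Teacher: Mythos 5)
Your proof is correct and follows essentially the same route as the paper: the one submodularity inequality $f(a\join b)+f(a\meet b)\le f(a)+f(b)$ combined with the maximality of $f(p)$ over $P$ shows that the corner lying in $P$ cannot be $p$ (the paper phrases it contrapositively: if one corner equals $p$, the other must lie in $P$). Your explicit case split and the remark that distinct corners force $a\ne b$ only make precise what the paper's terser argument leaves implicit.
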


\begin{proof}
	Given $q,r\in P-p$, since~$ P $ is woven in $L$ at least one of $q\join r$ and $q\meet r$ also lies in $P$. However, by the choice of~$ p $ we have~$ f(p)\ge f(q),f(r) $. Thus if one of $q\join r$ and $q\meet r$ equals $p$, the other also needs to lie in $P$. Thus $P-p$ is indeed woven in $L$.
\end{proof}

Unfortunately we cannot rely solely on~\cref{lem:delete-highest-order} to find an unravelling of $P$, since after its first application and the deletion of some~$ p $ the remaining poset~$ P-p $ may no longer be order-induced. This can happen if~ $P-p$ contains an $r$ such that $f(r)=f(p)$.

To rectify this, and thereby allow the repeated application of~\cref{lem:delete-highest-order}, we shall perturb the submodular function $f$ on $L$ to make it injective, whilst maintaining its submodularity and the assertion that~$ P=\{p\in L\mid f(p)<k\} $ for a suitable~$ k $.
This approach is similar to -- and inspired by -- the idea of \emph{tie-breaker functions} employed by Robertson and Seymour~\cite{GMX} to construct certain tree-decompositions.
For this we show the following:

\begin{THM}\label{thm:tiebreaker} %
    Let~$ L $ be a lattice, then there is an injective submodular function~$ {\rho\colon L\to\N} $. Moreover, we can chose $\rho$ so that, for any ${p_1,p_2,q_1,q_2\in L}$, we have that $\rho(p_1)+\rho(p_2)=\rho(q_1)+\rho(q_2)$ if and only if $\{p_1,p_2\}=\{q_1,q_2\}$.
\end{THM}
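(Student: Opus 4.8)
The plan is to construct $\rho$ explicitly as a non-negative integer combination of very simple submodular building blocks. For each $a\in L$ let $\mathbf 1_a\colon L\to\{0,1\}$ be given by $\mathbf 1_a(x)=0$ if $a\le x$ and $\mathbf 1_a(x)=1$ otherwise. The first step is to check that every $\mathbf 1_a$ is submodular. This is a short case analysis on the value of $\mathbf 1_a(p)+\mathbf 1_a(q)\in\{0,1,2\}$, using that $a\le p$ (or $a\le q$) already implies $a\le p\join q$, and that $a\le p\meet q$ holds precisely when $a\le p$ and $a\le q$. It is worth noting that it is $\mathbf 1_a$, and not the indicator $x\mapsto[\,a\le x\,]$ of the principal up-set, that is submodular here; the latter is in fact supermodular.

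Now fix an enumeration $L=\{a_1,\dots,a_m\}$ and set
\[
  \rho(x)\ \coloneqq\ \sum_{j\,:\,a_j\not\le x}5^{\,j-1}\,.
\]
As a non-negative integer combination of the submodular functions $\mathbf 1_{a_j}$, this $\rho$ is submodular, and it is plainly $\N$-valued (if one insists that $0\notin\N$, replace $\rho$ by $\rho+1$; adding a constant affects none of the arguments below). Put $\sigma(x)\coloneqq\sum_{j\,:\,a_j\le x}5^{\,j-1}$, so that $\rho(x)=W-\sigma(x)$ for the constant $W=\sum_{j=1}^{m}5^{\,j-1}$. Then $\rho$ is injective if and only if $\sigma$ is, and $\rho(p_1)+\rho(p_2)=\rho(q_1)+\rho(q_2)$ if and only if $\sigma(p_1)+\sigma(p_2)=\sigma(q_1)+\sigma(q_2)$, so it suffices to prove the two assertions for $\sigma$ in place of $\rho$.

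Here base $5$ does the work. Writing $\chi_x\in\{0,1\}^m$ for the indicator vector of the down-set $\{a_j:a_j\le x\}$, the number $\sigma(x)$ is exactly the integer whose $j$-th base-$5$ digit is $\chi_x[j]$; hence $\sigma$ is injective, since distinct elements of $L$ have distinct down-sets. Moreover the $j$-th base-$5$ digit of $\sigma(p_1)+\sigma(p_2)$ is $\chi_{p_1}[j]+\chi_{p_2}[j]\in\{0,1,2\}$ — still a legal digit, so this addition produces no carries. Consequently $\sigma(p_1)+\sigma(p_2)=\sigma(q_1)+\sigma(q_2)$ forces, digit by digit, $\chi_{p_1}+\chi_{p_2}=\chi_{q_1}+\chi_{q_2}$ as vectors in $\{0,1,2\}^m$.

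It remains to deduce $\{p_1,p_2\}=\{q_1,q_2\}$ from $\chi_{p_1}+\chi_{p_2}=\chi_{q_1}+\chi_{q_2}$; I expect this to be the only genuinely combinatorial point. Choose a $\le$-maximal element $a$ of the four-element set $\{p_1,p_2,q_1,q_2\}$; by symmetry (and relabelling within each pair) we may assume $a=p_1$. Reading off the coordinate of $a$ and noting that, among these four elements, being $\ge a$ coincides with being equal to $a$, we obtain $[\,p_1=a\,]+[\,p_2=a\,]=[\,q_1=a\,]+[\,q_2=a\,]$. If this common value is $2$, then $p_1=p_2=q_1=q_2=a$ and we are done; otherwise it is $1$, so after relabelling $p_1=a=q_1$, and cancelling the equal vectors $\chi_{p_1}=\chi_{q_1}$ from the identity leaves $\chi_{p_2}=\chi_{q_2}$, whence $p_2=q_2$ and $\{p_1,p_2\}=\{q_1,q_2\}$. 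The reverse implication of the ``moreover'' is immediate. Apart from correctly pinning down which direction of submodularity the building blocks satisfy, the only real obstacle is noticing that the family of principal down-sets of a lattice automatically enjoys this cancellation property for sums of two; once that is seen, the rest is bookkeeping.
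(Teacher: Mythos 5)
Your proof is correct and takes essentially the same route as the paper: both define $\rho$ as a constant minus a base-$b$ encoding of the principal down-set (the paper uses base $3$, you use base $5$) and exploit the absence of carries to reduce injectivity and the ``moreover'' part to statements about down-sets. The only cosmetic differences are your decomposition into submodular indicator summands (the paper checks submodularity directly via $I(q)\cap I(r)=I(q\meet r)$ and $I(q)\cup I(r)\subseteq I(q\join r)$) and your maximal-element argument for the final cancellation, where the paper instead derives a contradiction from a chain $p_1<q_1<p_2<q_2$ in the case of disjoint pairs.
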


\begin{proof}
	Enumerate~$ L $ as~$ L=\menge{p_1,\dots,p_n} $. For~$ q\in L$ let~$ I(q) $ be the set of all~$ i\le n $ with~$ p_i\leq q$. We define~$ \rho\colon L\to\N $ by letting
	\[ \rho(q)=3^{n+1}-\sum_{i\in I(q)}3^i\,. \]
	To see that this function is submodular note that for~$ q $ and~$ r $ in~$ L $ we have~$ I(q)\cap I(r)=I(q\meet r) $ and~$ I(q)\cup I(r)\sub I(q\join r) $.  Therefore each~$ i\le n $ appears in~$ I(q) $ and~$ I(r) $ at most as often as it does in~$ I(q\join r) $ and~$ I(q\meet r) $. This establishes the submodularity.
	
	It remains to show that~$ \rho(q)\ne\rho(r) $ for all~$ q\ne r $. For this note that by definition of~$ \rho $ we have~$ \rho(q)=\rho(r) $ if and only if~$ I(q)=I(r)$. But if $q\neq r$ then either $q\notin I(r)$ or $r\notin I(q)$.
	
	To see the moreover part we note that $\rho(p_1)+\rho(p_2)=\rho(q_1)+\rho(q_2)$ if and only if $I(p_1)\cup I(p_2)=I(q_1)\cup I(q_2)$ and $I(p_1)\cap I(p_2)=I(q_1)\cap I(q_2)$. Since $I(p_1),I(p_2),I(q_1),I(q_2)$ correspond to the down-closures of $p_1,p_2,q_1,q_2$ in $L$, this implies that $\{p_1,p_2\}=\{q_1,q_2\}$: Clearly, if $p_1=q_1$ then we need to have $p_2=q_2$, so suppose that $\{p_1,p_2\}$ and $\{q_1,q_2\}$ are disjoint. Since $p_1\in I(p_1)$ we see that $p_1\in I(q_1)\cup I(q_2)$, so suppose without loss of generality that $p_1<q_1$. Since $q_1\in I(q_1)$ and $q_1\notin I(p_1)$ we thus conclude that $q_1\in I(p_2)$, thus $q_1<p_2$. As $p_2\in I(p_2)$ this then implies $p_2<q_2$. As $q_2\in I(q_2)$ this is a contradiction as $q_2\notin I(p_1)\cup I(p_2)$.
\end{proof}

We immediately obtain the following corollary about universes of separations:
\begin{COR}\label{cor:tiebreaker_uni}
	Let~$ U $ be a universe of separations. Then there is a submodular order function~$ \gamma\colon\vU\to\N $ with~$ \gamma(r)\ne\gamma(s) $ for all~$ r\ne s $.
\end{COR}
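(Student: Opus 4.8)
The plan is to obtain $\gamma$ by symmetrising the injective submodular function $\rho$ provided by \cref{thm:tiebreaker}. Since a universe of separations is in particular a lattice, \cref{thm:tiebreaker} applied to $\vU$ yields an injective submodular function $\rho\colon\vU\to\N$ which moreover satisfies $\rho(p_1)+\rho(p_2)=\rho(q_1)+\rho(q_2)$ only when $\{p_1,p_2\}=\{q_1,q_2\}$. I would then define $\gamma\colon\vU\to\N$ by $\gamma(\vs)\coloneqq\rho(\vs)+\rho(\sv)$. This is symmetric by construction, and it takes values in $\N$ because $\rho$ does, so it remains only to verify submodularity and that $\gamma$ separates distinct unoriented separations.

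For submodularity, fix $\vs,\vt\in\vU$ and add the two instances of the submodularity of $\rho$, one for the pair $\vs,\vt$ and one for the pair $\sv,\vt^{*}$. Because the involution ${}^{*}$ reverses the order on the lattice $\vU$, the De~Morgan identities $\sv\join\vt^{*}=(\vs\meet\vt)^{*}$ and $\sv\meet\vt^{*}=(\vs\join\vt)^{*}$ hold, so the sum of the two inequalities rearranges exactly to $\gamma(\vs)+\gamma(\vt)\ge\gamma(\vs\join\vt)+\gamma(\vs\meet\vt)$.

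For the tie-breaking property, observe first that the symmetry of $\gamma$ makes the value $\gamma(r)$ well defined for an unoriented separation $r$. If $r\neq s$, then $\{\vr,\vr^{*}\}\neq\{\vs,\sv\}$, so the moreover part of \cref{thm:tiebreaker}, applied with $\{p_1,p_2\}=\{\vr,\vr^{*}\}$ and $\{q_1,q_2\}=\{\vs,\sv\}$, gives $\gamma(r)=\rho(\vr)+\rho(\vr^{*})\neq\rho(\vs)+\rho(\sv)=\gamma(s)$.

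I do not expect a genuine obstacle here; the one point worth stressing is that ordinary injectivity of $\rho$ would \emph{not} by itself survive the symmetrisation step, since two distinct unoriented separations could a priori give the same value $\rho(\vs)+\rho(\sv)$. This is exactly why the stronger additive-injectivity clause was built into \cref{thm:tiebreaker}, and invoking it is what closes the gap.
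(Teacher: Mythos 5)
Your proposal is correct and follows essentially the same route as the paper: apply \cref{thm:tiebreaker} to the universe viewed as a lattice, symmetrise by setting $\gamma(\vs)=\rho(\vs)+\rho(\sv)$, and use the moreover clause of \cref{thm:tiebreaker} to separate distinct unoriented separations. Your explicit verification of submodularity via the order-reversing involution and the remark that plain injectivity of $\rho$ would not survive symmetrisation simply spell out what the paper leaves as ``easy to see''.
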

\begin{proof}
 Let $\rho$ be the function obtained from \cref{thm:tiebreaker} applied to $U$ as a lattice. We set $\gamma(s)=\rho(\vs)+\rho(\sv)$. It is easy to see that this is a submodular order function. The moreover part of \cref{thm:tiebreaker} guarantees that indeed $\gamma(r)\neq \gamma(s)\;\forall r\neq s$.
\end{proof}

We can now establish~\cref{thm:unravelling}.

\begin{proof}[Proof of~\cref{thm:unravelling}.]
	Let~$ L $ be a lattice with a submodular order function~$f$. Let~$ P=\{p\in L\mid f(p)<k\} $ for some~$ k\in \R_0^+$. Let~$ \rho $ be the submodular function on~$ L $ from~\cref{thm:tiebreaker}. Let $\epsilon$ be the minimal difference between two distinct values of $f$, that is $\abs{f(p)-f(q)}\ge \epsilon$ or $f(p)=f(q)$ for any two $p,q\in L$. Since $L$ is finite, $\epsilon>0$. Pick a positive constant~$ c\in\R^+ $ so that~$ c\cdot\rho(p)<\epsilon $ for all~$ p\in L $. We define a new function~$ g\colon L\to \R^+_0 $ on~$ L $ by setting
	\[ g(p)\coloneqq f(p)+c\cdot\rho(p)\,. \]
    Then~$ g $ is submodular and, like~$ \rho $, has the property that~$ g(p)\ne g(q) $ whenever~$ p\ne q $. Enumerate the elements~$ p_1,\dots,p_n $ of~$ P $ so that~$ g(p_1)<g(p_2)<\dots<g(p_n) $. Then~$P_i:= \menge{p_1,\dots,p_i}\sub P $ is woven in $L$ for each~$ i\le n $: for~$ i=n $ it equals~$ P $, and for~$ i<n $ we have that~$ P_i=\{p\in L\mid g(p)<g(p_{i+1})\} $, which is woven in $L$ since~$ g $ is a submodular function on $L$. Thus $P=P_n\supseteq \dots \supseteq P_0=\emptyset$ is an unravelling for $P$.
\end{proof}

\cref{thm:unravelling} allows us to give a class of sets $\mathcal{X}\subseteq 2^V$ for which we can answer \cref{prop:fickle} positively. We say that a function $f:2^V\to \R$ is \emph{submodular} if $f(X)+f(Y)\ge f(X\cup Y)+f(X\cap Y)$ for all $X,Y\in 2^V$ and obtain the following theorem as a corollary:
\cororderfun*
\begin{proof}
 By adding a large constant to $f(X)$ for every $X\subseteq V$ we may suppose that $f(X)\ge 0 \;\forall X\subseteq V$. Applying \cref{thm:unravelling} to the subset-lattice $2^X$ together with its subset $\mathcal{X}$ results in the desired sequence $\emptyset=\cX_0\subseteq \cX_1\subseteq \dots \subseteq \cX_n=\cX$.
\end{proof}

Moreover, \cref{thm:unravelling} also allow us to show that separation systems $\vS_k$ inside a universe of separations with a submodular order function can be unraveled.

\begin{COR}\label{cor:unravelling_uni}
	Let~$ \vU $ be a universe of separations with a submodular order function~$ f $ and~$ \vS=\vS_k $ for some~$ k $. Then~$ \vS $ can be unravelled.
\end{COR}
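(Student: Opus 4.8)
The plan is to run the proof of~\cref{thm:unravelling} again, essentially verbatim, reading~\cref{cor:tiebreaker_uni} in place of~\cref{thm:tiebreaker}. The reason we cannot simply apply~\cref{thm:unravelling} to the lattice~$\vU$ with its submodular order function~$f$ is that an \emph{unravelling of a separation system} is required to delete, at each step, a whole unoriented separation~$\{\vs,\sv\}$, whereas the unravelling produced by~\cref{thm:unravelling} deletes single lattice elements and its intermediate sets need not be closed under the involution~$^\ast$. Using the symmetric tie-breaker from~\cref{cor:tiebreaker_uni} repairs this.

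Concretely, I would take~$\gamma\colon\vU\to\N$ from~\cref{cor:tiebreaker_uni}, so that~$\gamma$ is a submodular order function with~$\gamma(r)\ne\gamma(s)$ for all distinct unoriented separations~$r,s$. Let~$\epsilon>0$ be the minimal difference between two distinct values of~$f$ (positive since~$\vU$ is finite), choose~$c\in\R^+$ with~$c\cdot\gamma(s)<\epsilon$ for every unoriented separation~$s$, and set~$g(\vs)\coloneqq f(\vs)+c\cdot\gamma(s)$. Then~$g$ is again a submodular order function on~$\vU$ — symmetry is inherited from~$f$ and~$\gamma$, and submodularity holds because both summands are submodular — and~$g(r)\ne g(s)$ whenever~$r\ne s$, since~$\gamma$ already separates unoriented separations. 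The one step needing a moment's care is that~$g$ still yields~$\vS$ as a set of the form~$\{\vs\in\vU\mid g(\vs)<k'\}$: if~$\vr\notin\vS$ and~$\vs\in\vS$ then~$f(\vr)\ge k>f(\vs)$, so~$f(\vr)$ and~$f(\vs)$ are distinct values of~$f$ and hence~$f(\vr)-f(\vs)\ge\epsilon$, whence~$g(\vr)-g(\vs)\ge\epsilon-c\cdot\gamma(s)>0$; so a suitable~$k'$ exists (the cases~$\vS=\emptyset$ and~$\vS=\vU$ being trivial).

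With such a~$g$ in hand, I would enumerate the unoriented separations~$s_1,\dots,s_n$ of~$S$ so that~$g(s_1)<\dots<g(s_n)$ and set~$\vS_i\coloneqq\{\vs\in\vU\mid s\in\{s_1,\dots,s_i\}\}$, so that~$\vS_i\sm\vS_{i-1}=\{\vs_i,\sv_i\}$ for each~$1\le i\le n$ and~$\vS_n=\vS$. It then remains only to check that each~$\vS_i$ is structurally submodular in~$\vU$: for~$i=n$ this is the hypothesis, and for~$i<n$ the estimate above shows that no separation outside~$\vS$ has~$g$-value below~$g(s_{i+1})$, so~$\vS_i=\{\vs\in\vU\mid g(\vs)<g(s_{i+1})\}$, which is structurally submodular in~$\vU$ because~$g$ is a submodular order function. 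Hence~$\emptyset=\vS_0\subseteq\dots\subseteq\vS_n=\vS$ is an unravelling of~$\vS$. The only genuine obstacle is the bookkeeping in the second paragraph — verifying that the perturbation keeps~$g$ submodular while not moving any separation across the threshold~$k$ — and this is exactly the point already handled in the proof of~\cref{thm:unravelling}; everything else follows at once from~\cref{cor:tiebreaker_uni} and the definitions.
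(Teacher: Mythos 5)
Your proof is correct and takes essentially the same route as the paper, which also establishes this corollary by rerunning the proof of \cref{thm:unravelling} with the symmetric tie-breaker $\gamma$ from \cref{cor:tiebreaker_uni} in place of $\rho$, precisely so that each step removes a whole unoriented separation $\{\vs,\sv\}$. The bookkeeping you spell out (submodularity and injectivity of the perturbed function, and that no separation crosses the threshold $k$) is exactly the part the paper leaves implicit by saying ``perform the same argument''.
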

\begin{proof}
 Perform the same argument as in the proof of \cref{thm:unravelling}, using the function $\gamma$ from \cref{cor:tiebreaker_uni} instead of the function $\rho$ from \cref{thm:tiebreaker}.
\end{proof}

\section{A woven subset of a lattice which cannot be unravelled}\label{sec:sandwich}%

In this section we are going to construct a counterexample to \cref{prop:woven_lattice} for non-distributive lattices.
 So, we construct a lattice $L$ together with a woven subset $P$ of $L$ so that $P-p$ is not woven in $L$ for any $p\in P$. %

This construction needs to be such that for every element $p$ of $P$ there are elements $q$ and $r$ of $P$ such that either $p=q\join r$ and $q\meet r\not \in P$ or $p=q\meet r$ and $q\join r\not \in P$.

We will construct our lattice $L$ by building its Hasse diagram. To be able to prove that our construction results in a lattice we need to start with a graph of high girth. (For the definition of graphs and their basic properties we follow \cite{DiestelBook16noEE}.) Specifically we will use a 4-regular graph of high girth as a starting point. Lazebnik and Ustimenko have constructed such graphs:
\begin{LEM}[\cite{LAZEBNIK1995275}]
 There exists a $4$-regular graph $G$ with girth at least $11$.
\end{LEM}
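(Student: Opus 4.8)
The plan is that this is a known result of Lazebnik and Ustimenko, so the proof reduces to invoking their construction. The most direct route is to cite \cite{LAZEBNIK1995275}: for a prime power $q$ and $k\geq 2$ their graphs $D(k,q)$ are $q$-regular bipartite graphs whose girth tends to infinity with $k$. Specialising to $q=4$ (which is a prime power) and choosing $k$ large enough that the girth bound exceeds $10$, one obtains a $4$-regular graph of girth at least $11$; if desired one may then pass to a single connected component, which is still $4$-regular with girth at least $11$. Strictly speaking, no argument beyond quoting the girth estimate recorded in their paper is needed here.

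If one wanted a self-contained statement, I see two further options. The explicit one: the point–line incidence graph of a generalised hexagon of order $(3,3)$ — for instance the split Cayley hexagon $H(3)$, which exists since $3$ is a prime power — is a bipartite graph in which every line carries $4$ points and every point lies on $4$ lines, hence $4$-regular, and it has girth exactly $12\geq 11$ because the incidence graph of a generalised $n$-gon has girth $2n$. The non-constructive one: the Erdős–Sachs theorem guarantees, for every $d\geq 3$ and every $g\geq 3$, the existence of a $d$-regular graph of girth at least $g$; applying it with $d=4$ and $g=11$ finishes the proof. The Erdős–Sachs argument goes by fixing a suitable number $n$ of vertices, choosing among all $4$-regular graphs on $n$ vertices one that minimises the number of edges lying on cycles of length less than $11$, and then using an edge-switching move to show that this minimum must be $0$.

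The main obstacle in either self-contained version is keeping the graph exactly $4$-regular while eliminating all short cycles: the naive strategy of deleting one edge from each short cycle destroys regularity, so one is forced to rely either on the algebraic rigidity underlying the $D(k,q)$ and generalised-polygon constructions or on the careful switching argument of Erdős–Sachs. For the purposes of this paper, however, only the bare existence of such a graph is required, so citing \cite{LAZEBNIK1995275} suffices and no girth computation needs to be reproduced.
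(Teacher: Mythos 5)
Your primary route is exactly what the paper does: the lemma is quoted from \cite{LAZEBNIK1995275} without proof, and invoking the $q$-regular graphs $D(k,q)$ with $q=4$ and $k$ large enough is precisely the intended justification. The additional self-contained alternatives you sketch (the incidence graph of a generalised hexagon of order $(3,3)$, or Erd\H{o}s--Sachs) are correct but not needed here.
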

For contradiction arguments we will try to find short closed walks in our graph. The following simple lemma then tells us that these contradict the high girth of $G$:
\begin{LEM}\label{lem:walk_cycle}
 If $G$ is a graph, $W=v_1v_2\dots v_nv_1$ a closed walk in $G$ such that there exists an $j$ with $v_i\neq v_j$ for all $i\neq j$ and $v_{j-1}\neq v_{j+1}$, then $W$ contains a cycle. In particular, $G$ contains a cycle of length at most $n$.
\end{LEM}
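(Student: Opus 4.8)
The plan is to argue that a closed walk $W = v_1 v_2 \dots v_n v_1$ satisfying the two hypotheses must contain a cycle, and then note that a cycle contained in $W$ has length at most the length of $W$, which is at most $n$. So the heart of the matter is purely combinatorial: extracting a cycle from a closed walk that is not ``degenerate'' at the index $j$.

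First I would set up the extraction. A closed walk of length $n$ need not contain a cycle in general (consider walking out along a path and back: $v_1 v_2 v_1$), so we must use the hypotheses. The assumption that there is a $j$ with $v_i \neq v_j$ for all $i \neq j$ says that $v_j$ is visited exactly once by $W$; the assumption $v_{j-1} \neq v_{j+1}$ says the walk does not immediately backtrack at $v_j$ (indices read cyclically modulo $n$, so $v_0 = v_n$ and $v_{n+1} = v_1$). The natural approach: rotate the walk so that it starts and ends at $v_j$, i.e. consider $W' = v_j v_{j+1} \dots v_n v_1 \dots v_{j-1} v_j$, which is again a closed walk of length $n$ through the same edges. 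Now walk along $W'$ from its start $v_j$ until the first time a vertex is repeated — say $v_a = v_b$ with $a < b$ and all of $v_a, v_{a+1}, \dots, v_{b-1}$ distinct (in the $W'$-indexing). The segment $v_a v_{a+1} \dots v_b$ is then a closed walk with no internal repetitions; if $b - a \ge 3$ this segment is a cycle and we are done. The only way this can fail is $b - a \le 2$: either $b - a = 1$, meaning the walk uses the same edge twice in a row (a loop) — impossible in a simple graph unless the repeated ``vertices'' are actually an immediate backtrack — or $b - a = 2$, meaning $v_a v_{a+1} v_{a+2}$ with $v_a = v_{a+2}$, i.e. the walk goes from a vertex to a neighbour and straight back.

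The key step, and the main obstacle, is ruling out these short degenerate repetitions using the two hypotheses. The point of starting the rotated walk at $v_j$ is exactly this: since $v_j$ occurs only once along $W$ (and hence along $W'$ except as the common endpoints), the first repeated vertex $v_a = v_b$ of $W'$ cannot be $v_j$ itself with $a$ the starting index; and the no-backtrack condition $v_{j-1} \neq v_{j+1}$ handles the very first step so that the first ``collision'' does not happen as $v_j, v_{j+1}, v_{j+2}=v_j$. More carefully, I would argue that if every first repetition along the rotated walk were an immediate backtrack $v_{a} v_{a+1} v_{a+2}$ with $v_a = v_{a+2}$, one could ``cancel'' that backtrack, shortening the walk while preserving closedness and the property that $v_j$ is traversed exactly once and non-degenerately; iterating, the walk would have to collapse entirely to the single vertex $v_j$, contradicting $v_{j-1} \neq v_{j+1}$ (which forces at least two genuinely distinct neighbours of $v_j$ to appear). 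Thus some first repetition gives a genuine cycle, of length $b - a \le n$, and since $W$ and $W'$ traverse the same edges this cycle lies in $W$. Applied with $W$ a short closed walk found in $G$, this yields a cycle in $G$ of length at most $n$, contradicting the high girth of $G$ whenever $n$ is small.

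I expect the delicate bookkeeping to be the backtrack-cancellation argument: one has to be careful that cancelling a backtrack $v_{a-1} v_a v_{a+1}$ with $v_{a-1} = v_{a+1}$ does not accidentally destroy the hypotheses at $v_j$ (it cannot, since $v_j$ appears only once, so $v_j \notin \{v_{a}\}$ and $v_j$ is not one of the endpoints being merged unless the backtrack is literally $v_{j-1} v_j v_{j+1}$, which is excluded by $v_{j-1} \neq v_{j+1}$). Once that is in place, the induction on walk length terminates, and the ``In particular'' clause is immediate. An alternative, perhaps cleaner, route avoids induction entirely: among all cyclic rotations and among all minimal repeated-vertex segments choose one of minimum length; minimality plus the two hypotheses at $v_j$ force that segment to be a genuine cycle directly. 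Either way, the only real content is that the two stated hypotheses are precisely what is needed to prevent a totally degenerate back-and-forth walk, and I would present whichever of the two write-ups is shorter.
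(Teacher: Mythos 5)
Your proof is correct, but it takes a genuinely different route from the paper's. You rotate the walk so that it starts at $v_j$, extract the segment up to the first repeated vertex, and dispose of the degenerate short repetitions by cancelling immediate backtracks and inducting on the length of the walk; the bookkeeping you flag does go through, since $v_j$ occurs only at the two endpoints of the rotated walk (so no cancellation can delete or merge it), the backtrack $v_{j-1}v_jv_{j+1}$ is excluded by $v_{j-1}\neq v_{j+1}$, and the iteration cannot collapse the walk, because a closed walk of length $2$ through $v_j$ would force $v_{j-1}=v_{j+1}$. The paper instead argues in one stroke: since $v_j$ is visited exactly once, the subgraph $W-v_j$ is still connected (the rest of the walk is a $v_{j+1}$--$v_{j-1}$ walk), so it contains a path from $v_{j-1}$ to $v_{j+1}$; because $v_{j-1}\neq v_{j+1}$ this path is nontrivial, and adding back $v_j$ with the edges $v_{j-1}v_j$ and $v_jv_{j+1}$ closes it into a cycle, which has length at most $n$ since all its vertices lie on $W$. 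The paper's argument buys brevity -- no induction and no case analysis on short repetitions -- at the cost of passing through the subgraph/connectivity viewpoint, whereas yours is pure walk surgery on the vertex sequence; both exploit the hypotheses in exactly the same way, namely the uniqueness of the visit to $v_j$ and the non-backtracking condition there.
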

\begin{proof}
 Since $v_j\neq v_i$ for all $i\neq j$, the graph $W-v_j$ is connected. Thus, $W-v_j$ contains a path between $v_{j-1}$ and $v_{j+1}$ which together with $v_j$ forms the desired cycle.
\end{proof}

We are now ready to start the construction of our lattice $L$ together with its woven subset $P$.

Let $G$ be a $4$-regular graph of girth at least $11$. The ground set of our lattice $L$ consists of a top element $t$, a bottom element $b$ and $4$ disjoint copies of $V(G)$ which we call $V^-,V,W$ and $W^+$.

We say that $v\in V^-\cup V\cup W\cup W^+$ \emph{corresponds} to $w\in V^-\cup V\cup W\cup W^+$ if they are copies of the same vertex in $V(G)$.

We now start with defining our partial order on $L$. We define, for $v\in V$ and $w\in W$, that $v\le w$ if and only if there is an edge between $v$ and $w$ in $G$.

Now consider the bipartite graph $G'$ on $V\cup W$ where $v\in V$ is adjacent to $w\in W$ if and only if $v\le w$. This bipartite graph is $4$-regular graph and has girth at least $12$. Every regular bipartite graph has a 1-factor. Hence $G'$ has a colouring of $E[G']$ with two colours, \emph{red} and \emph{blue} say, such that every vertex in $G'$ is adjacent to exactly two red and exactly two blue edges. We fix one such colouring.

To define our partial order for $v^-\in V^-$ and $v\in V$ we define that $v^-\le v$ if and only if there is a red edge between $v$ and the vertex in $W$ corresponding to $v^-$. Thus, every $v^-$ in $V^-$ lies below exactly two points in $V$, we call these the \emph{neighbours in $V$} of $v^-$.

Similarly, we let $w\le w^+$ for $w\in W$ and $w^+\in W^+$ if and only if there is a blue edge between $w$ and the vertex in $V$ corresponding to $w^+$. We call the two points in $W$ which lie below $w^+ \in W^+$ the \emph{neighbours in $W$} of $w^+$.

We finish our definition of $\le$ by taking the transitive closure and defining $b\le v$ and $v\le t$ for every $v\in L$. It is easy to see that this $\le$ is indeed a partial order.

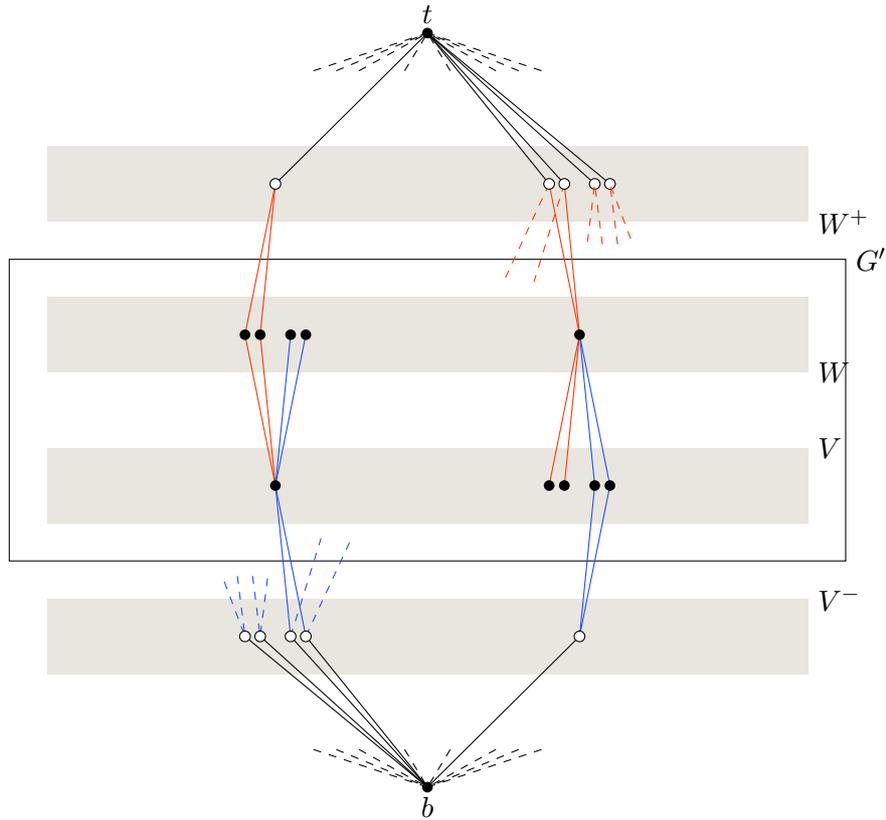
\begin{figure}[ht]
 \begin{center}
\begin{tikzpicture}
    \definecolor{red}{rgb}{1,0.2,0}
    \definecolor{blue}{rgb}{0.1,0.3,1}
    \definecolor{lightgray}{rgb}{0.91,0.9,0.87}
    \fill (0,5) circle [radius=2pt] node [above] {$t$};
    \fill (0,-5) circle [radius=2pt] node [below] {$b$};
    \foreach \x in {0.3, 0.9, 1.2, 1.5} {
        \draw[dashed] (\x, 4.5) -- (0, 5);
        \draw[dashed] (-\x, 4.5) -- (0, 5);
        \draw[dashed] (\x, -4.5) -- (0, -5);
        \draw[dashed] (-\x, -4.5) -- (0, -5);
    }
    \fill[lightgray] (-5, -3.5) rectangle (5, -2.5) node [black, right] {$V^-$};
    \fill[lightgray] (-5, -1.5) rectangle (5, -0.5) node [black, right] {$V$};
    \fill[lightgray] (-5, 1.5) rectangle (5, 0.5) node [black, right] {$W$};
    \fill[lightgray] (-5, 3.5) rectangle (5, 2.5) node [black, right] {$W^+$};

    \draw (-5.5, -2) rectangle (5.5, 2) node [right] {$G'$};

    \draw[red] (-2, -1) -- (-2.2, 1) -- (-2, 3);
    \draw[red] (-2, -1) -- (-2.4, 1) -- (-2, 3);
    \draw[blue] (-1.8, -3) -- (-2, -1) -- (-1.8, 1);
    \draw[blue] (-1.6, -3) -- (-2, -1) -- (-1.6, 1);
    \draw[blue, dashed] (-1.8, -3) -- (-1.4, -1.7);
    \draw[blue, dashed] (-1.6, -3) -- (-1.0, -1.7);
    \fill (-2, -1) circle [radius=2pt];
    \draw(-2, 3) -- (0, 5);
    \filldraw[fill=white] (-2, 3) circle [radius=2pt];
    \fill (-2.2, 1) circle [radius=2pt];
    \fill (-2.4, 1) circle [radius=2pt];
    \fill (-1.8, 1) circle [radius=2pt];
    \fill (-1.6, 1) circle [radius=2pt];
    \draw(-1.8, -3) -- (0, -5);
    \draw(-1.6, -3) -- (0, -5);
    \draw(-2.2, -3) -- (0, -5);
    \draw(-2.4, -3) -- (0, -5);
    \filldraw[fill=white] (-1.8, -3) circle [radius=2pt];
    \filldraw[fill=white] (-1.6, -3) circle [radius=2pt];
    \draw[blue, dashed] (-2.5, -2.2) -- (-2.4, -3) -- (-2.7, -2.2);
    \draw[blue, dashed] (-2.3, -2.2) -- (-2.2, -3) -- (-2.1, -2.2);
    \filldraw[fill=white] (-2.4, -3) circle [radius=2pt];
    \filldraw[fill=white] (-2.2, -3) circle [radius=2pt];

    \draw[blue] (2, 1) -- (2.2, -1) -- (2, -3);
    \draw[blue] (2, 1) -- (2.4, -1) -- (2, -3);
    \draw[red] (1.8, 3) -- (2, 1) -- (1.8, -1);
    \draw[red] (1.6, 3) -- (2, 1) -- (1.6, -1);
    \draw[red, dashed] (1.8, 3) -- (1.4, 1.7);
    \draw[red, dashed] (1.6, 3) -- (1.0, 1.7);
    \fill (2, 1) circle [radius=2pt];
    \draw(2, -3) -- (0, -5);
    \filldraw[fill=white] (2, -3) circle [radius=2pt];
    \fill (2.2, -1) circle [radius=2pt];
    \fill (2.4, -1) circle [radius=2pt];
    \fill (1.8, -1) circle [radius=2pt];
    \fill (1.6, -1) circle [radius=2pt];
    \draw(1.8, 3) -- (0, 5);
    \draw(1.6, 3) -- (0, 5);
    \draw(2.2, 3) -- (0, 5);
    \draw(2.4, 3) -- (0, 5);
    \filldraw[fill=white] (1.8, 3) circle [radius=2pt];
    \filldraw[fill=white] (1.6, 3) circle [radius=2pt];
    \filldraw[fill=white] (2.4, 3) circle [radius=2pt];
    \filldraw[fill=white] (2.2, 3) circle [radius=2pt];
    \draw[red, dashed] (2.5, 2.2) -- (2.4, 3) -- (2.7, 2.2);
    \draw[red, dashed] (2.3, 2.2) -- (2.2, 3) -- (2.1, 2.2);
\end{tikzpicture}
\end{center}
\caption{The Hasse diagram of $L$. The points in $P$ are denote by black dots, the points outside of $P$ are white.}
\end{figure}

We claim that $(L,\le)$ is a lattice, that $P=V\cup W\cup \{t,b\} \subseteq L$ is a woven subset of $L$ and that $P-p$ is not woven in $L$ for any $p\in P$. To show that $L$ is a lattice and that $P$ is woven in $L$ we have to show that there is, for every pair $x,y\in L$, a supremum and an infimum and that at least one of these two lies in $P$ if $x,y\in P$. We do so via a series of lemmas which distinguish different cases for $x,y$.

Let us first consider the case that either both $x$ and $y$ lie in $V$, or that they both lie in~$W$:
\begin{LEM}\label{lem:same_class}
 If $v_1,v_2\in V$, then there is a supremum and an infimum of $v_1,v_2$ in $L$. Moreover, if $v_1\meet v_2\neq b$ then $v_1\join v_2\in W$.
 
 Analogously, if $w_1,w_2\in W$, then there is a supremum and an infimum of $w_1,w_2$ in $L$. Moreover, if $w_1\join w_2\neq t$ then $w_1\meet w_2\in V$.
\end{LEM}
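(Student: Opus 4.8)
I would prove the assertion about $v_1,v_2\in V$ in detail; the assertion about $w_1,w_2\in W$ then follows by the symmetric argument, since the construction is invariant under reversing the order of $L$ while simultaneously interchanging $V^-\leftrightarrow W^+$, $V\leftrightarrow W$, $t\leftrightarrow b$ and the two colours of $G'$. So fix distinct $v_1,v_2\in V$, say $v_i=(x_i)_V$ with $x_1,x_2\in V(G)$, where for $p\in V(G)$ I write $p_{V^-},p_V,p_W,p_{W^+}$ for the four copies of $p$. Since $G$ has girth at least $11$, any two distinct vertices of $G$ have at most one common neighbour, so I may split into the case where $x_1$ and $x_2$ have a (then unique) common neighbour $w^*$ in $G$, and the case where they do not.

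The plan is first to pin down the possible bounds of $\{v_1,v_2\}$. As $V$ is an antichain and nothing in $V^-\cup\{b\}$ lies above an element of $V$, every upper bound of $\{v_1,v_2\}$ lies in $W\cup W^+\cup\{t\}$, and unwinding the definition of $\le$ shows that the only element of $W$ that can lie above both $v_1$ and $v_2$ is $(w^*)_W$, which works exactly when $w^*$ exists. Symmetrically, every lower bound of $\{v_1,v_2\}$ lies in $V^-\cup\{b\}$, and the only candidate in $V^-$ is $(w^*)_{V^-}$, which is a lower bound precisely when the $G'$-edges joining $(w^*)_W$ to $v_1$ and to $v_2$ are both red. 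Hence the greatest lower bound of $\{v_1,v_2\}$ always exists — it is $(w^*)_{V^-}$ if $x_1,x_2$ have a common neighbour and both these edges are red, and $b$ otherwise — and in particular $v_1\meet v_2\neq b$ forces $x_1,x_2$ to have a common neighbour, so the ``moreover'' part of the lemma will follow once I show that in that case $v_1\join v_2\in W$.

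In the first case, with $w^*$ the common neighbour, I claim $v_1\join v_2=(w^*)_W$. It is an upper bound, the only upper bounds besides it lie in $W^+\cup\{t\}$ by the previous paragraph, and $(w^*)_W<t$; so it suffices to check that every $z_{W^+}\in W^+$ lying above both $v_1$ and $v_2$ lies above $(w^*)_W$. Unwinding the transitive closure, from $z_{W^+}\ge v_i$ I get a vertex $w_i\in V(G)$ with $x_iw_i\in E(G)$ such that the $G'$-edge from $z_V$ to $(w_i)_W$ is blue; and if $z_{W^+}\not\ge(w^*)_W$ then $w^*\notin\{w_1,w_2\}$ and $w_1\neq w_2$, since either failure would already give $z_{W^+}\ge(w^*)_W$ (the second because $w_1$ would then be a common neighbour of $x_1,x_2$). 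But then $w_1\,x_1\,w^*\,x_2\,w_2\,z\,w_1$ is a closed walk of length $6$ in $G$ whose six vertices a short check shows to be distinct, hence a $6$-cycle — impossible, as $G$ has girth at least $11$. So $(w^*)_W$ is below every upper bound of $\{v_1,v_2\}$ and thus equals $v_1\join v_2\in W$, which proves the ``moreover''.

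In the second case, $x_1,x_2$ have no common neighbour, so every upper bound of $\{v_1,v_2\}$ lies in $W^+\cup\{t\}$, and since $W^+$ is an antichain, a supremum exists as soon as at most one element of $W^+$ lies above both $v_1$ and $v_2$. If two distinct ones, $z_{W^+}$ and $z'_{W^+}$ with $z\neq z'$, did, I would unwind the transitive closures to get $a,b,a',b'\in V(G)$ with $x_1a,\,x_2b,\,x_1a',\,x_2b'\in E(G)$ and with the $G'$-edges $z_Va_W$, $z_Vb_W$, $z'_Va'_W$, $z'_Vb'_W$ all blue; here $a\neq b$ and $a'\neq b'$, as otherwise $x_1,x_2$ would have a common neighbour. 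Then $(x_1)_V\,a_W\,z_V\,b_W\,(x_2)_V\,b'_W\,z'_V\,a'_W\,(x_1)_V$ is a closed walk of length $8$ in $G'$ on which $z_V$ occurs exactly once — it differs from $(x_1)_V$ and $(x_2)_V$ (else $a$ or $b$ is a common neighbour of $x_1,x_2$) and from $z'_V$ as $z\neq z'$ — with two distinct neighbours $a_W,b_W$ on the walk; so \cref{lem:walk_cycle} gives a cycle of length at most $8$ in $G'$, contradicting the girth of $G'$ being at least $12$. Hence $v_1\join v_2$ is the unique element of $W^+$ above both $v_1,v_2$, or $t$ if there is none. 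The one genuinely delicate point throughout is these two cycle-chasing steps — one must make sure the short closed walks really contain cycles, either by excluding every vertex coincidence by hand or by exhibiting a once-occurring vertex with distinct walk-neighbours so that \cref{lem:walk_cycle} applies — and the slack between the walk lengths $6$ and $8$ and the girth bounds $11$ and $12$ is precisely what makes room for that.
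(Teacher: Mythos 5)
Your proposal is correct and takes essentially the same approach as the paper: a case split on whether $x_1,x_2$ have a common neighbour, identification of the unique candidate bounds in $W$, $W^+$ and $V^-$, and elimination of extra candidates by exhibiting short closed walks (length $6$ and $8$) whose resulting cycles would contradict the girth bounds, with the ``moreover'' part and the duality argument for the $W$-case handled just as in the paper. The only cosmetic difference is that you run the length-$6$ argument in $G$ (checking all six vertices distinct) rather than in $G'$ via \cref{lem:walk_cycle}.
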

\begin{proof}
Let us start by showing that there is a supremum of $v_1$ and $v_2$.

First consider the case that the neighbourhoods of $v_1$ and $v_2$ in $G'$ intersect, that is, $N_{G'}(v_1)\cap N_{G'}(v_2)\neq \emptyset$. In this case, there is only one point in the intersection, since if there are $w_1,w_2\in N_{G'}(v_1)\cap N_{G'}(v_2), w_1\neq w_2,$ then $v_1w_1v_2w_2v_1$ would be a cycle of length $4$ in $G'$, contradicting the fact that $G'$ has girth at least $12$. We claim that the single point in the intersection, which we call $w$, is the supremum of $v_1$ and $v_2$.

To see this consider any $x\in L$ such that $v_1\le x, v_2\le x$. We need to show that $w\le x$. If $x=t$ then this is clear and $x\in W\cup V \cup V^- \cup \{b\}$ is not possible, so suppose that $x\in W^+$. Let $w_1,w_2$ be the neighbours in $W$ of $x$, i.e., $w_1,w_2\le x$. We show that $w_1=w$ or $w_2=w$. So suppose that $w\neq w_1,w_2$. Let $v_x\in V$ be the point corresponding to $x$. As $v_1\le x$ we may suppose without loss of generality that $v_1\le w_1$. Now if $v_2\le w_2$ then $wv_1w_1v_sw_2v_2w$ contains a cycle of length at most $6$ in $G'$ by \cref{lem:walk_cycle}, as $v_1\neq v_2$ and $w\not\in \{v_1,w_1,v_s,w_2,v_2\}$. This contradicts the fact that $G'$ has girth at least $12$. Thus $v_2\le w_1$ and hence $w_1=w$ as $N_{G'}(v_1)\cap N_{G'}(v_2)=\{w\}$, contradicting the assumption that $w\neq w_1$ and thus proving $w\le x$.

Now suppose that $N_{G'}(v_1)\cap N_{G'}(v_2)=\emptyset$.

Then every candidate for a supremum of $v_1$ and $v_2$ is either $t$, or lies in $W^+$, hence it is enough to show that there cannot be two elements $w^+_1,w^+_2\in W^+$ both satisfying $v_1,v_2\le w^+_1,w^+_2$. So suppose that there are two such points and denote the neighbours of $w_1^+$ and $w_2^+$ in $W$ as $w_{11},w_{12}$ and $w_{21},w_{22}$ respectively, i.e., $w_{11},w_{12}\le w^+_1$ and $w_{21},w_{22}\le w^+_2$.

As $v_1\le w^+_1,w^+_2$, we may suppose without loss of generality that $v_1\le w_{11},w_{21}$. Since $N_{G'}(v_1)\cap N_{G'}(v_2)=\emptyset$, we thus have $v_2\le w_{12},w_{22}$ and $w_{12},w_{22}\not\in \{w_{11},w_{21}\}$. Let us denote the corresponding points of $w^+_1$ and $w^+_2$ in $V$ as $v_{w^+_1}$ and $v_{w^+_2}$. Since $w^+_1\neq w^+_2$ either $w_{12}\neq w_{22}$ or $w_{11}\neq w_{21}$, as otherwise $G'$ would contain a cycle of length 4.
In any case, we consider the closed walk $v_1w_{11}v_{w^+_1}w_{12}v_2w_{22}v_{w^+_2}w_{21}v_1$.
As $v_{w^+_1}\neq v_{w^+_2}$, we have $v_1\neq v_{w^+_1}$ or $v_1\neq v_{w^+_2}$ and $v_2\neq v_{w^+_1}$ or $v_2\neq v_{w^+_2}$. Furthermore, either $w_{11}\notin \{w_{12},w_{21},w_{22}\}$ and $w_{21}\notin \{w_{11},w_{12},w_{22}\}$ or $w_{12}\notin \{w_{11},w_{21},w_{22}\}$ and $w_{22}\notin \{w_{11},w_{12},w_{21}\}$
This allows the application of \cref{lem:walk_cycle} to our walk, yielding a cycle of length at most 8, which contradicts the fact that $G'$ has girth at least $12$. Thus there exists a supremum $v_1\join v_2$ in $L$.

One candidate for the infimum $v_1\meet v_2$ is $b$. Every other candidate needs to lie in $V^-$. However, there can be at most one such candidate in $V^-$, otherwise, these candidates together with $v_1,v_2$ would correspond to a cycle of length $4$ in $G'$ contradicting the fact that $G'$ has girth at least $12$. Thus there is indeed an infimum $v_1 \meet v_2$.
 
 Moreover, if $v_1\meet v_2\neq b$, then there is a point $w\in W$ such that both, $v_1w$ and $v_2w$ are red edges in $G'$, hence $N_{G'}(v_1)\cap N_{G'}(v_2)\neq\emptyset$, which shows the moreover part of the claim.
 
 The statement for $w_1,w_2\in W$ follows by a symmetric argument.
\end{proof}
We can now apply \cref{lem:same_class} to show the existence of suprema and infima between $v\in V$ and $w\in W$:
\begin{LEM}\label{lem:different_class}
 If $v\in V$ and $w\in W$, then there is a supremum and an infimum of $v$ and $w$ in $L$. Moreover, if $v\meet w\neq b$ then $v\join w=t$ or $v\le w$.
\end{LEM}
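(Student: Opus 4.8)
The plan is to dispose of the trivial case, then establish the existence of the supremum and of the infimum separately, and finally the moreover part; in each step the relevant bound is pinned down by ruling out competing candidates through short closed walks in $G'$, to which \cref{lem:walk_cycle} is applied against the girth bound $12$. \cref{lem:same_class} is invoked to handle the $V$--$V$ and $W$--$W$ sub-configurations that come up.

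If $v\le w$ then $v\meet w=v$ and $v\join w=w$, both of which exist, and the moreover part holds; so assume $v\not\le w$ from now on. For the supremum, observe that the upper bounds of $w$ in $L$ are $w$ itself, the two elements of $W^+$ lying above $w$ (one for each blue edge at $w$), and $t$; since $v\not\le w$, the element $w$ is not a common upper bound of $v$ and $w$, so all common upper bounds lie in $W^+\cup\{t\}$. Hence it suffices to show that at most one of the two elements $w^+_1,w^+_2\in W^+$ above $w$ satisfies $v\le w^+_i$: then the common upper bounds of $v$ and $w$ form a chain, whose least element (the successful $w^+_i$, or $t$ if there is none) is the supremum. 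Suppose both do. Unwinding the order relation, each $w^+_i$ has neighbours $w$ and some $a_i\in W$ in $W$, with $v\le a_i$, and its corresponding point $v_{w^+_i}$ in $V$ is blue-adjacent in $G'$ to $w$ and to $a_i$. If $a_1=a_2$ then $w\,v_{w^+_1}\,a_1\,v_{w^+_2}\,w$ is a $4$-cycle in $G'$; otherwise $v\,a_1\,v_{w^+_1}\,w\,v_{w^+_2}\,a_2\,v$ is a closed walk of length $6$ whose six vertices are pairwise distinct, using $v\not\le w$ (which forbids $v=v_{w^+_i}$ and $w=a_i$) and the fact that distinct elements of $W^+$ correspond to distinct points in $V$. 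Either way $G'$ contains a cycle of length at most $6$, contradicting its girth.

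The infimum is handled dually: the lower bounds of $v$ are $v$, the two elements of $V^-$ below $v$, and $b$, so all common lower bounds of $v$ and $w$ lie in $V^-\cup\{b\}$, and the symmetric closed-walk argument (now through the red edges at the $W$-correspondents of the relevant elements of $V^-$) shows that at most one element of $V^-$ lies below both $v$ and $w$; hence $v\meet w$ exists. Here \cref{lem:same_class} shortens matters, since any common lower bound in $V^-$ is in particular a common lower bound of $v$ and one of the four neighbours of $w$ in $V$, for which \cref{lem:same_class} already controls the $V^-$-candidates.

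For the moreover part, assume $v\meet w\ne b$ and $v\not\le w$, and suppose for contradiction that $v\join w\ne t$, so there is a $w^+\in W^+$ with $v,w\le w^+$. From $v\meet w\ne b$ we obtain a $v^-\in V^-$ below both $v$ and $w$; writing $c\in W$ for its correspondent and $v'\in V$ for its second neighbour in $V$, this yields red edges $vc$ and $v'c$ and the edge $v'w$ in $G'$. From $w^+$ we get blue edges $w\,v_{w^+}$ and $d\,v_{w^+}$ with $v\le d$, hence also the edge $vd$ in $G'$. These six edges trace the closed walk $v\,c\,v'\,w\,v_{w^+}\,d\,v$ of length $6$, and the remaining work is the bookkeeping needed to invoke \cref{lem:walk_cycle} (or to recognise a genuine $6$-cycle) — this is the main obstacle. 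Using $v\not\le w$ one rules out $v=v_{w^+}$, $c=w$ and $d=w$; the only coincidences that survive are $v'=v_{w^+}$ and $c=d$, and these cannot hold simultaneously, since that would make the single $G'$-edge $v'c$ both red and blue. If neither holds the walk is a $6$-cycle; if exactly one holds, some vertex still occurs exactly once in the walk with two distinct walk-neighbours (take $j$ at the vertex $c$ when $v'=v_{w^+}$, and at $w$ when $c=d$). In every case \cref{lem:walk_cycle} yields a cycle of length at most $6$ in $G'$, contradicting its girth at least $12$; hence $v\join w=t$.
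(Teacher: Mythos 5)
Your proof is correct and follows essentially the same route as the paper's: the candidate suprema and infima are confined to $W^+\cup\{t\}$ resp.\ $V^-\cup\{b\}$, duplicate candidates are excluded by short closed walks via \cref{lem:walk_cycle} against the girth bound, and your six-vertex walk for the moreover part is exactly the paper's walk $vw_1v_{w^+}wv_1w_{v^-}v$ up to notation and orientation. The only (harmless) differences are that you enumerate the two $W^+$-covers of $w$ directly rather than routing through \cref{lem:same_class} as the paper does for the existence part, and that you spell out the degenerate cases of the final walk more explicitly than the paper.
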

\begin{proof}
If $v\le w$ then the statement is obvious, so suppose that $v\not\le w$.

By \cref{lem:same_class}, every point $w_i\in N_{G'}(v)$ has a supremum with $w$ which is either $t$ or lies in $W^+$. Moreover, there can be at most one point $w_i\in N_{G'}(v)$ such that the supremum $w_i\join w$ is in $W^+$, since if there are two, $w_1,w_2\in N_{G'}(v)$ say, then, by \cref{lem:same_class}, $w_1\meet w$ and $w_2\meet w\in$ both lie in $V$ and thus $wv_1w_1vw_2v_2w$ is a cycle of length $6$ in $G'$. 
Hence $v\join w$ is well-defined.

A symmetric argument shows that also $v\meet w$ is well-defined, so all that is left to show is that $v\join w\in W^+$ and $v\meet w\in V^-$ cannot both occur.

However, if this were the case, say $w^+=v\join w\in W^+$ and $v^-=v\meet w\in V^-$, we can consider the corresponding vertex $v_{w^+}$ of $w^+$ in $V$ and the  corresponding vertex $w_{v^-}$ of $v^-$ in $W$. By definition, there is a vertex $w_1\in W$ such that $vw_1\in E(G')$ and both $w_1v_{w^+}$ and $wv_{w^+}$ are blue edges. Similarly, there is a vertex $v_1\in V$ such that $v_1w\in E(G')$ and both $v_1w_{v^-}$ and $vw_{v^-}$ are red edges.
Consider the closed walk $vw_1v_{w^+}wv_1w_{v^-}v$.
We have $v\notin \{v_{w^+}, v_1\}$ as $v\not\le w$ and similarly $w\notin \{w_1, w_{v^-}\}$. Moreover, since every edge in $G'$ has precisely one colour we have $v_1w_{v^-}\neq v_{w^+}w_1$ and thus either $w_{v^-}\neq w_1$ or $v_1\neq v_{w^+}$.
We can thus apply \cref{lem:walk_cycle} to our walk to show the existence of a cycle of length at most $6$ in $G'$, which is a contradiction.
\end{proof}
Finally it remains to consider suprema $x \join y$ and infima $x \meet y$ where one of $x$ and $y$ lies in $V^-$ or $W^+$:
\begin{LEM}\label{lem:no_class}
 If $v^-\in V^-$ and $x\in L$, then there exists a supremum and an infimum of $v$ and $x$ in $L$.
 
 Similarly, if $w^+\in W^+$ and $x\in L$, then there exists a supremum and an infimum of $v$ and $x$ in $L$.
\end{LEM}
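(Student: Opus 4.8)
The plan is to reduce every supremum and infimum involving a point of $V^-$ or $W^+$ to the joins and meets already provided by \cref{lem:same_class,lem:different_class}, and to dispatch the few residual configurations — where, a priori, two minimal upper bounds (or two maximal lower bounds) could be incomparable — by exhibiting a short closed walk in $G'$ and invoking \cref{lem:walk_cycle}. I shall only treat $v^-\in V^-$; the case of $w^+\in W^+$ is symmetric, with the roles of $t,b$, of $V,W$, and of the two colours interchanged, exactly as in \cref{lem:same_class,lem:different_class}.

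First the easy half. Since $v^-$ has $b$ as its only lower cover, the only elements $\le v^-$ are $v^-$ and $b$; hence $v^-\meet x$ is $v^-$ if $v^-\le x$ and $b$ otherwise, so the infimum always exists. Dually, $t$ is the only element strictly above a given $w^+\in W^+$, so $w^+\join x$ always exists. It therefore remains to produce the supremum $v^-\join x$ for every $x\in L$. Let $v_1,v_2\in V$ be the two neighbours in $V$ of $v^-$; then $\uparrow v^-=\{v^-\}\cup\uparrow v_1\cup\uparrow v_2$, since $v_1,v_2$ are the only upper covers of $v^-$. Consequently the common upper bounds of $v^-$ and $x$ are $\big(\{v^-\}\cap\uparrow x\big)\cup\big(\uparrow v_1\cap\uparrow x\big)\cup\big(\uparrow v_2\cap\uparrow x\big)$. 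If $v^-\le x$ this is $\uparrow x$, with minimum $x$; and $x\in\{b,t\}$, or $x\in W^+$ (where $\uparrow x=\{x,t\}$), are immediate. So assume $v^-\not\le x$ with $x\in V^-\cup V\cup W$. Then $\{v^-\}\cap\uparrow x=\emptyset$, the joins $v_1\join x,v_2\join x$ exist by \cref{lem:same_class} (for $x\in V\cup V^-$) or \cref{lem:different_class} (for $x\in W$), and $\uparrow v_i\cap\uparrow x=\uparrow(v_i\join x)$; thus the common upper bounds of $v^-$ and $x$ are $\uparrow(v_1\join x)\cup\uparrow(v_2\join x)$, which has a minimum as soon as $v_1\join x$ and $v_2\join x$ are comparable. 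For $x\in W$ this is automatic, since by \cref{lem:different_class} each $v_i\join x$ equals $x$ or $t$. Hence the real work is for $x\in V$, and one level deeper for $x\in V^-$, where the same computation instead produces a union of up to four filters $\uparrow(v_i\join x_j)$, with $x_1,x_2$ the neighbours in $V$ of $x$.

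The remaining task is therefore, in each of these cases, to rule out a minimal upper bound of one ``type'' being incomparable to a minimal upper bound of another. By \cref{lem:same_class} each relevant $v_i\join x_j$ lies in $V\cup W\cup W^+\cup\{t\}$; a value $t$ dominates everything and can be discarded, and a value in $V$ can only be a common neighbour of the two points below it, which is controlled by first observing that $v^-$ and $x$ share at most one neighbour in $V$ (two would close a $4$-cycle in $G'$). In every other case, assuming two incomparable joins, one unwinds the defining relations of $L$ — recalling that $v^-\le v$ means a red edge of $G'$ between $v$ and the vertex of $W$ corresponding to $v^-$, that $v\le w$ means an edge of $G'$, and that $w\le w^+$ means a blue edge of $G'$ between $w$ and the vertex of $V$ corresponding to $w^+$ — into a short list of edges of $G'$ that close up into a short walk (for instance $w_0\,v_1\,w\,x\,w'\,v_2\,w_0$ when $v_1\join x=w$ and $v_2\join x=w'$ are incomparable points of $W$ with $x\in V$, where $w_0$ is the vertex of $W$ corresponding to $v^-$), on which \cref{lem:walk_cycle} produces a short cycle in $G'$, contradicting its girth.

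I expect the main obstacle to be organisational rather than conceptual: the case $x\in V^-$ forces one to keep track of several superimposed distinctions — whether $v^-$ and $x$ have a common neighbour in $V$ or not, whether each of the joins in play lands in $W$ or in $W^+$, and which of $v_1,v_2,x_1,x_2$ and the associated auxiliary vertices coincide — and in each branch to pick the right vertex of the resulting closed walk (typically $x$ itself, or the $V$-vertex of a $W^+$-element) at which the hypotheses of \cref{lem:walk_cycle} are met, checking that any forced coincidence of walk-vertices either collapses the alleged conflict or already yields a $4$-cycle in $G'$. Beyond this bookkeeping the argument uses nothing but \cref{lem:same_class,lem:different_class,lem:walk_cycle} and the girth bound on $G'$.
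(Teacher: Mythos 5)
Your overall framework is the same as the paper's: the infimum is immediate because the only elements below $v^-$ are $v^-$ and $b$; the upper bounds of $v^-$ other than $v^-$ itself are exactly $\uparrow v_1\cup\uparrow v_2$; and the supremum question reduces to the comparability of $v_1\join x$ and $v_2\join x$ (respectively of the four joins $v_i\join x_j$ when $x\in V^-$), to be settled via short closed walks and \cref{lem:walk_cycle}. However, there is a concrete error: the case $x\in W$ is \emph{not} automatic. The ``moreover'' clause of \cref{lem:different_class} only applies under the hypothesis $v_i\meet x\neq b$, which you have no right to assume here; without it, $v_i\join x$ need not be $x$ or $t$ but can perfectly well lie in $W^+$ (if $w^+\in W^+$ has neighbours $w_1,w_2$ in $W$ and $v_iw_1\in E(G')$, then $w^+$ is a common upper bound of $v_i$ and $w_2$, and typically their least one). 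So for $x\in W$ one must still rule out that $v_1\join x$ and $v_2\join x$ are two incomparable elements of $W^+$, and this genuinely needs the girth argument: the paper does it with a closed walk of length $8$ through $v_1,v_2,w_{v^-},x$ and the $V$-vertices and $W$-neighbours of the two alleged joins, followed by the distinctness checks required by \cref{lem:walk_cycle}. As written, your proof dismisses a case that carries real content, and the justification offered is a misreading of \cref{lem:different_class}.

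Beyond this, the parts you yourself flag as ``the real work'' ($x\in V$ with one or both joins in $W^+$, and all of $x\in V^-$) are only promised, not carried out, and that is where most of the proof lives: each branch needs a specific walk (of length up to $10$) and a verification that enough of its vertices are distinct before \cref{lem:walk_cycle} applies. Note also that the paper handles $x\in V^-$ without any new walk arguments: since the $V^-$-versus-$V$ case is already settled, the suprema $v^-\join x_1$, $v^-\join x_2$, $x\join v_1$, $x\join v_2$ exist, and each of them forces two of the four candidates $v_i\join x_j$ to be comparable, which suffices to identify the least candidate. This reduction also takes care of your ``common neighbour in $V$'' subcase, where your sketch would still owe an argument that, say, $v_1\le v_2\join x_2$ when $v_1$ is the shared neighbour; the observation that two shared neighbours would close a $4$-cycle does not by itself control that configuration. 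So: repair the $x\in W$ case with its own walk argument, and either execute the remaining case analysis in full or adopt the paper's reduction for $x\in V^-$.
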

\begin{proof}
 If $v^-$ and $x$ are comparable, the statement is obvious, so suppose that this is not the case. It is then immediate that $v^-\meet x=b$.
 
 Let $v_1,v_2$ be the two points in $V$ such that $v^-=v_1\meet v_2$ and let $w_{v^-}$ be the point in $W$ corresponding to $v^-$. We note that any $l\in L$ satisfies $v^- < l$ if and only if $v_1\le l$ or $v_2\le l$. 
 We distinguish multiple cases, depending on whether $x$ lies in $W^+, W, V$ or $V^-$.

 If $x\in W^+$, then $x\join v^-=t$.
 
 If $x\in W$, then $x\join v_1$ and $x\join v_2$ exist by \cref{lem:different_class} and it is enough to show that $x\join v_1$ and $x\join v_2$ are comparable. If they are incomparable, then $x\join v_1\in W^+$ and $x\join v_2\in W^+$ and moreover $x\join v_1\neq x\join v_2$ and $v_1\not \le x\join v_2$ as well as $v_2\not \le x\join v_1$. Let $v_3\in V$ be the point corresponding to $x\join v_1$, let $v_4\in V$ be the point corresponding to $x\join v_2$, let $w_3\in W$ such that $w_3\join x=x\join v_1$ and let $w_4\in W$ such that $w_4\join x=x\join v_2$. Note that both $v_1,v_2,v_3,v_4$ and $x,w_{v^-},w_3,w_4$ consist of pairwise distinct points as $v_1\not\le x$ and $v_2\not \le x$ and $w_{v^-}\notin \{x,w_3,w_4\}$, thus $w_3v_3wv_4w_4v_2w_{v^-}v_1w_3$ needs to be a cycle of length $8$ in $G'$ contradicting the fact that $G'$ has girth at least $12$.
 
 If $x\in V$, then again $x\join v_1$ and $x\join v_2$ exist by \cref{lem:different_class}, and if they are incomparable we may suppose that $x\join v_1\in W^+\cup W$ and $x\join v_2\in W^+\cup W$ and moreover $x\join v_1\neq x\join v_2$. 
 
 If $x\join v_1\in W$ and $x\join v_2\in W$, then $v_1w_{v^-}v_2(x\join v_2)x(x\join v_1)v_1$ would be a cycle of length $6$ in $G'$ as $x\join v_1\neq x\join v_2$.
 
 Now suppose that $x\join v_1\in W$ and $x\join v_2\in W^+$. Let $v_{x_2}$ be the point in $V$ corresponding to $x\join v_2$ and let $w_1,w_2\in W$ such that $w_1\join w_2=x\join v_2$. We may suppose that $w_1,w_2\neq x\join v_1$ and that $v_2\le w_1$ and $x\le w_2$.  Note that $v_{x_2}\neq x$ as otherwise $x\join v_x=w_2$. Now $xw_2v_{x_2}w_1v_2w_{v^-}v_1(x\join v_1)x$ contains a cycle of length at most $8$ in $G'$ by \cref{lem:walk_cycle}, as $s\notin \{v_1,v_2,v_{x_2}\}$ and $x\join v_1\neq w_2$.
 
 So we may suppose that $x\join v_1\in W^+$ and $x\join v_2\in W^+$. 
 
 Let $v_{x_1}$ be the point in $V$ corresponding to $x\join v_1$, $v_{x_2}$ be the point in $V$ corresponding to $x\join v_2$, let $w_1,w_2,w_3,w_4\in W$ such that $w_1\join w_2=x\join v_1$ and $w_3\join w_4=x\join v_2$. We may suppose that $v_1\le w_1$, $v_2\le w_3$ and $x\le w_2,w_4$. Note that $x\notin \{v_1,v_2,v_{x_1},v_{x_2}\}$ and that $w_4\neq w_2$ as otherwise $w_4\le x\join v_1$ and thus $x\join v_2= x\join w_4\le x\join v_1$. Thus $xw_2v_{x_1}w_1v_1w_{v^-}v_2w_3v_{x_2}w_4x$ contains a cycle in $G'$ of length at most $10$ by \cref{lem:walk_cycle}.
 
 So the remaining case is $x\in V^-$. Let us denote the vertex in $W$ corresponding to $x$ as $w_x$ and let $v_3,v_4\in V$ such that $v_3\meet v_4=x$. Since every candidate for a supremum of $v^-$ and $x$ lies above one of $v_1\join v_3, v_1\join v_4, v_2\join v_3$ and $v_2\join v_4$, all of which exist by \cref{lem:different_class}, it is enough to show that all these points are comparable, since then the smallest of them needs to be the supremum of $v^-$ and $x$.
 
 However, we know by the previous argument that $v^-\join v_3$ exists, which needs to be equal to $v_1\join v_3$ or $v_2\join v_3$. Hence $v_1\join v_3$ and $v_2\join v_3$ are comparable.
 
 Similarly, if we consider $v^-\join v_4$ we see that $v_1\join v_4$ and $v_2\join v_4$ are comparable.
 
 If we consider $x\join v_1$, we observe that $v_1\join v_3$ and $v_1\join v_4$ are comparable.
 
 And finally, if we consider $x\join v_2$, we see that  $v_2\join v_3$ and $v_2\join v_4$ are comparable as well and therefore there indeed exists a supremum of $v^-$ and $x$.
\end{proof}
We have now seen that $L$ is indeed a lattice and that $P$ is woven in $L$. This allows us to state and prove the main result of this section:
\begin{THM}
 $L$ is a lattice and $P=V\cup W\cup \{t,b\}\subseteq L$ is woven in $L$ such that $P-p$ is not woven in $L$ for any $p\in P$.
\end{THM}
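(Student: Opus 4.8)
The first two assertions are already essentially in hand: \cref{lem:same_class,lem:different_class,lem:no_class} together produce a supremum and an infimum for every pair $x,y\in L$ (pairs in which $x$ or $y$ equals $t$ or $b$ being trivial), so $L$ is a lattice, and the ``moreover'' parts of \cref{lem:same_class,lem:different_class} show that $P=V\cup W\cup\{t,b\}$ is woven, since for $x,y\in V$ one of $x\meet y=b$ and $x\join y\in W$ holds, dually for $x,y\in W$, and for $x\in V$, $y\in W$ one gets $x\meet y=b$ or $x\join y\in\{y,t\}$. So the substance is to show, for every $p\in P$, that $P-p$ is not woven. I would use the following reduction throughout: because $P$ itself is woven, for any $q,r\in P-p$ at least one of $q\join r$, $q\meet r$ lies in $P$, and if that corner is not $p$ then it lies in $P-p$; hence to certify that $P-p$ is not woven it suffices to exhibit $q,r\in P\sm\{p\}$ one of whose corners equals $p$ while the other lies in $V^-\cup W^+$, and so outside $P$. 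I would do this in four cases.

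If $p=v\in V$, let $q=w_1$ and $r=w_2$ be the two vertices of $W$ joined to $v$ by a \emph{blue} edge of $G'$ — equivalently, the two ``neighbours in $W$'' of the copy $w^+$ of $v$ in $W^+$. Since the intervals $[v,w_1]$ and $[w_1,w^+]$ in $L$ are just $\{v,w_1\}$ and $\{w_1,w^+\}$, while $w_1\not\le w_2$ and $w_2\not\le w_1$, from $v\le w_1,w_2$ and $w_1,w_2\le w^+$ one reads off $w_1\meet w_2=v$ and $w_1\join w_2=w^+\notin P$; thus $\{w_1,w_2\}$ witnesses that $P-v$ is not woven. The case $p=w\in W$ is the exact dual, using the two \emph{red} edges at $w$: if $v_1,v_2\in V$ are their other endpoints, then $v_1\join v_2=w$ while $v_1\meet v_2$ is the copy of $w$ in $V^-$, which is not in $P$.

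If $p=t$ (the case $p=b$ being dual under $V\leftrightarrow W$, red $\leftrightarrow$ blue), then since $x\meet y=t$ only for $x=y=t$, I must arrange $q\join r=t$ and $q\meet r\notin P$. Fix any $v^-\in V^-$ with neighbours $v_1\ne v_2$ in $V$ and let $w_{v^-}\in W$ be the copy of $v^-$; since $v_1,v_2$ are joined to $w_{v^-}$ by red edges, $4$-cycle-freeness of $G'$ gives $N_{G'}(v_1)\cap N_{G'}(v_2)=\{w_{v^-}\}$. Choose $q=w_1\in N_{G'}(v_1)\sm\{w_{v^-}\}$ and $r=w_2\in N_{G'}(v_2)\sm\{w_{v^-}\}$; then $w_1\ne w_2$ and neither equals $t$. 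From $v^-\le v_1\le w_1$ and $v^-\le v_2\le w_2$ we get $v^-\le w_1\meet w_2$; on the other hand the interval $[v^-,w_1]$ equals $\{v^-,v_1,w_1\}$ (because $v^-<x$ forces $v_1\le x$ or $v_2\le x$, because $v_2\not\le w_1$, and because $w_1$ covers $v_1$), and neither $v_1$ nor $w_1$ lies below $w_2$; hence $w_1\meet w_2=v^-$. Since $v^-$ is not in $V$, the ``moreover'' part of \cref{lem:same_class} now forces $w_1\join w_2=t$, so $\{w_1,w_2\}$ witnesses that $P-t$ is not woven.

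Apart from citing the three structural lemmas, everything here is bookkeeping in the Hasse diagram of $L$: evaluating a few short intervals, invoking a ``moreover'' clause to pin down a join once its dual meet is known (or vice versa), and checking that the named elements ($v_1,v_2,w_1,w_2,w^+$ and their duals) are pairwise distinct and lie inside or outside $P$ as claimed. The one place I expect to need genuine care is precisely this last batch of checks in the $p=t$ and $p=b$ cases — confirming, for instance, that $w_1\meet w_2$ is \emph{exactly} $v^-$ rather than some larger element — and each of those reduces to the absence of short cycles in $G'$, indeed already to $4$-cycle-freeness, far below the girth bound $12$ that was needed to make $L$ a lattice at all.
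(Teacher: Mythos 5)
Your proposal is correct and follows essentially the same route as the paper: cite \cref{lem:same_class,lem:different_class,lem:no_class} for lattice-ness and wovenness of $P$, then exhibit, for each type of $p\in P$, a pair of elements of $P-p$ both of whose corners leave $P-p$ (the blue pair at $v$ for $p\in V$, the red pair at $w$ for $p\in W$, and a pair whose other corner lies in $V^-$ resp.\ $W^+$ for $p=t,b$). You merely spell out, via short interval computations and $4$-cycle-freeness of $G'$, the existence and identification of the witnessing pairs for $t$ and $b$ that the paper asserts without detail, which is a welcome but not essentially different argument.
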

\begin{proof}
 By \cref{lem:same_class,lem:different_class,lem:no_class} $L$ is indeed a lattice. To see that $P$ is woven in $L$ observe that by \cref{lem:same_class}, \cref{lem:different_class} and the fact that $t$ and $b$ are comparable with every element in $P$ it follows that at most one of $x\join y$ and $x\meet y$ lie outside of $P$, for any $x,y\in P$.

 For any $p\in V$ there are $w_1,w_2\in W$ such that $pw_1$ and $pw_2$ are both blue edges in $G'$, thus both $w_1\join w_2$ and $w_1\meet w_2$ lie outside of $P-p$. Similarly, $P-p$ is not woven in $L$ for any $p\in W$. Finally, if $p=b$ we note that there are $v_1,v_2\in V$ such that $v_1\join v_2\in W^+$ which implies that $v_1\meet v_2=b$ and shows that $P-b$ is not woven in $L$. Similarly, $P-t$ is not woven in $L$.
\end{proof}

As before, this result about woven subsets of lattices allows us to directly obtain a result about structurally submodular separation systems, as we can use this lattice $L$ to construct a universe $\vU$ of separations together with a structurally submodular separation system $\vS\subseteq \vU$ which cannot be unravelled:
\begin{THM}\label{thm:universe}
 There exists a universe $\vU$ of separations and a submodular subsystem $\vS\subseteq \vU$ such that $\vS-\{\vs,\sv\}$ is not submodular in $\vU$ for any $\vs\in \vS$.
\end{THM}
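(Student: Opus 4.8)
Our strategy is to lift the lattice $L$ and its woven subset $P$ from the construction just completed to a universe of separations in which structural submodularity of a symmetric subsystem is \emph{equivalent} to wovenness of the corresponding subset of $L$. All the genuine difficulty has already been spent in building $L$ and $P$; what remains is a routine lattice computation and a short case analysis, together with the one observation that we must take the \emph{full} symmetric lift of $P$ rather than anything smaller.

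Concretely, I would let $\vU$ have ground set $L\times\{+,-\}$, ordered so that $L\times\{+\}$ carries the order of $L$, so that $L\times\{-\}$ carries the \emph{reverse} order of $L$, and so that $(a,-)\le(b,+)$ for all $a,b\in L$; the involution is $(a,+)^\ast\coloneqq(a,-)$ and $(a,-)^\ast\coloneqq(a,+)$. The first step is to check that $(\vU,\le)$ is a lattice: for two positive elements one gets $(a,+)\join(b,+)=(a\join b,+)$ and $(a,+)\meet(b,+)=(a\meet b,+)$; for two negative elements $(a,-)\join(b,-)=(a\meet b,-)$ and $(a,-)\meet(b,-)=(a\join b,-)$; and for a mixed pair $(a,+)\join(b,-)=(a,+)$ and $(a,+)\meet(b,-)=(b,-)$. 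The top of $\vU$ is $(t,+)$ and the bottom is $(t,-)$, where $t$ is the top of $L$. The second step is to verify that $^\ast$ is an order-reversing involution, which is immediate from the definition of $\le$; hence $\vU$ is a universe of separations. (As an aside, $L\times\{+\}$ is a sublattice of $\vU$ isomorphic to $L$, so $\vU$ is non-distributive.)

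Next I would put $\vS\coloneqq P\times\{+,-\}$; this set is closed under $^\ast$, hence a separation system inside $\vU$, and it is non-empty since $P$ is. The key point, proved by the case analysis above, is that for \emph{any} $A\sub L$ the set $A\times\{+,-\}$ is submodular in $\vU$ if and only if $A$ is woven in $L$: for a mixed pair of elements of $A\times\{+,-\}$ both corners already lie in $A\times\{+,-\}$; for two positive elements $(a,+),(b,+)$ the corners are $(a\join b,+)$ and $(a\meet b,+)$, one of which lies in $A\times\{+,-\}$ exactly when $a\join b\in A$ or $a\meet b\in A$; and the two-negative case is the image of the two-positive case under $^\ast$. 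Applying this with $A=P$, and using that $P$ is woven in $L$, shows that $\vS$ is submodular in $\vU$.

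Finally, fix $\vs\in\vS$; by symmetry we may write $\vs=(p,+)$ with $p\in P$, so that $\sv=(p,-)$ and $\vS\sm\{\vs,\sv\}=(P-p)\times\{+,-\}$. By the equivalence just established this set is not submodular in $\vU$, because $P-p$ is not woven in $L$. One can even read off the witnessing pair from the construction of $L$: if $p\in V$, take the two vertices $w_1,w_2\in W$ joined to $p$ by blue edges of $G'$, so that neither $w_1\join w_2$ nor $w_1\meet w_2$ lies in $P-p$, and hence neither corner of the positive pair $(w_1,+),(w_2,+)$ lies in $\vS\sm\{\vs,\sv\}$; the cases $p\in W$ and $p\in\{t,b\}$ are handled the same way, using in the latter case a pair $v_1,v_2\in V$ with $v_1\join v_2\in W^+$ and $v_1\meet v_2=b$. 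This yields the claimed universe and subsystem, and the only real obstacle is the bookkeeping in the lattice verification for $\vU$—all the substance lies in the preceding sections.
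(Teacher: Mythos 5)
Your proposal is correct and takes essentially the same route as the paper: both build the universe out of $L$ together with its order-dual, let $\vS$ be the full symmetric copy of $P$, and reduce the failure of unravelling to the fact that $P-p$ is never woven in $L$. The only difference is cosmetic -- the paper places $L$ and its dual side by side and identifies their tops and bottoms, whereas you stack the dual copy below $L$; either gluing gives a universe in which submodularity of the symmetric lift is equivalent to wovenness in $L$, so your verification goes through as written.
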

\begin{proof}
    Let $L'$ be a copy of $L$ with reversed partial order, i.e., the poset-dual of $L$.
    In the disjoint union $L \sqcup L'$ we now identify the copy of $t$ in $L$ (the top of $L$) with the copy of $b$ in $L'$ (the top of $L'$) and the copy of $b$ in $L$ with the copy of $t$ in $L'$ to obtain $\vU$.
    It is easy to see that this forms a universe of separations and that $\vS=P\cup P'$ (where $P\subseteq L$ is as above and $P'\subseteq L'$ is the image of $P$ in $L'$) is a separation system which is submodular in $\vU$. Moreover, there is no separation $\vs\in \vS$ such that $\vS-\{\vs,\sv\}$ is again submodular in $\vU$.
\end{proof}

Note that neither our lattice $L$ nor the constructed universe $\vU$ of separations are distributive.

\section{Woven posets}
\label{sec:weak_unravel}

Instead of asking in \cref{prop:woven_lattice} for a woven subset $P$ inside a lattice $L$, we might as well directly ask for a partially order set $P$, which is woven in itself. More precisely let us say that a partially order set $P$ is \emph{woven} if we have, for any two elements $p,q$ of $P$ a supremum or an infimum \emph{in $P$}, i.e., there exists a $r\in P$ such that $p\le r,q\le r$ and $r\le s$ whenever $s\in P$ such that $q\le s$ and $p\le s$ or there exists a $r\in P$ such that $p\ge r,q\ge r$ and $r\ge s$ whenever $s\in P$ such that $q\ge s$ and $p\ge s$.

The \emph{Dedekind-MacNeille-completion}~\cite{MacNeille} from lattice theory implies that we can find, for each poset $P$, a lattice $L$ in which $P$ can be embededded in such a way that existing joins and meets in $P$ are preserved. Hence if $P$ is a finite woven set there exists a lattice $L$ in which $P$ can be embedded so that the image of $P$ in $L$ is woven in $L$.

Using this notion of wovenness inside the poset itself, we can now weaken the concept of unravelling, by considering a woven poset $P$ instead of a woven subset of a lattice. We will be able to show that, given a woven poset $P$, we can always remove a point so that the remainder is again a woven poset.

Even though every woven poset can be embedded into a lattice, this still is a proper weakening of the unravelling conjecture.
The key difference here lies in the different perspective we take on $P-p$, given a poset $P$ and some $p \in P$:
if we consider $P$ as a woven poset and $P-p$ is again woven, then there are lattices $L$ and $L'$ in which $P$ and $P-p$, respectively, can be embedded so that the images are woven as subset of these lattice. However, these two lattices are different, and in general it is not possible to find one lattice in which both $P$ and $P-p$ can be embedded so that their images are woven in that lattice. In this sense, having an unravelling for the wovenness of a poset is a weaker property than having an unravelling as a woven subset of a lattice.

To prove this weaker unravelling property for woven posets we will show that every woven poset contains a point $p$ with precisely one lower (or one upper) cover, i.e. there exists precisely one $q$ such that  $p>q$ ($p<q$) and there does not exists any $c\in P$ such that $p>c>q$ ($p<c<q$). Deleting such a point does not destroy the wovenness, as shown by the following lemma:

\begin{LEM}\label{lem:delete_deg1}
 Let $P$ be a woven poset and $p\in P$ a point with precisely one lower (upper) cover $p'$,
 then $P' = P - p$ is a woven poset.
\end{LEM}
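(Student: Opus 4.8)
The plan is to fix two elements $q,r\in P'=P-p$ and exhibit a supremum or an infimum for them in $P'$, starting from one that exists in $P$ by wovenness. Deleting $p$ can only destroy a supremum or an infimum of $q$ and $r$ if that element \emph{is} $p$, so the argument reduces to the two cases where the supremum of $q,r$ in $P$ equals $p$, and where the infimum of $q,r$ in $P$ equals $p$. Before treating these I would record one elementary fact about the single lower cover: in a finite poset, if $p$ has a unique lower cover $p'$ then every $x\in P$ with $x<p$ satisfies $x\le p'$; indeed, a maximal element $y$ of the finite non-empty set $\{z\in P:x\le z<p\}$ is then a lower cover of $p$, hence equals $p'$, so $x\le y=p'$.

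Using this fact I would first rule out entirely the possibility that the supremum of $q$ and $r$ in $P$ is $p$: since $q,r\neq p$ this would force $q<p$ and $r<p$, hence $q\le p'$ and $r\le p'$, so $p'$ would be a common upper bound of $q$ and $r$ in $P$, giving $p\le p'$ and contradicting $p'<p$. Consequently, if $q$ and $r$ have a supremum $s$ in $P$ then $s\neq p$, so $s\in P'$, and $s$ is still their supremum in $P'$ (any upper bound of $q,r$ lying in $P'$ is in particular an upper bound in $P$, hence lies above $s$); in that case we are done. Otherwise $q$ and $r$ have no supremum in $P$, so wovenness of $P$ provides an infimum $m$ of $q$ and $r$ in $P$. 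If $m\neq p$ then $m\in P'$ and, by the same reasoning as for $s$, $m$ is the infimum of $q$ and $r$ in $P'$. If $m=p$, I claim that $p'$ is the infimum of $q$ and $r$ in $P'$: any lower bound of $\{q,r\}$ lying in $P'$ is a lower bound of $\{q,r\}$ in $P$, hence is $\le m=p$, hence (being distinct from $p$) lies strictly below $p$, hence is $\le p'$; conversely $p'<p\le q$ and $p'<p\le r$ show that $p'$ is itself a lower bound of $\{q,r\}$ lying in $P'$, so it is the greatest such, i.e.\ their infimum in $P'$.

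This exhausts all cases, so every pair of elements of $P'$ has a supremum or an infimum in $P'$, and $P'$ is woven. The dual statement, for a point $p$ with exactly one upper cover, follows by applying the above to the order-dual of $P$, since wovenness is self-dual. I do not expect a genuine obstacle here: the only points needing care are the finite-poset fact about lower covers and keeping track of the fact that it is the case ``supremum equals $p$'' which cannot occur, while the case ``infimum equals $p$'' is the one rescued by passing from $p$ to $p'$.
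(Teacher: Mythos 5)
Your proof is correct and follows essentially the same route as the paper's: you show the supremum of two elements of $P-p$ can never be $p$ (via the observation that everything strictly below $p$ lies below its unique lower cover $p'$), and that an infimum equal to $p$ can be replaced by $p'$, with the upper-cover case handled by duality. The only difference is that you spell out the finiteness argument behind the cover fact, which the paper leaves implicit.
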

\begin{proof}
 Let $x,y \in P'$. We need to show that $x,y$ have a supremum or an infimum in $P'$.
 If they have a supremum $s$ in $P$, then $s \neq p$: as $p'$ is the only lower cover of $p$ we have $x,y\le p'$ as soon as $x,y\le p$. Thus $s \in P'$ is also the supremum of $x$ and $y$ in $P'$.
 
 If $x,y$ have an infimum $z$ in $P$, then either $z \neq p$ and $z$ is also the infimum in $P'$ or
 $z = p$, in which case $p'$ is the infimum of $x$ and $y$ in $P'$, as $p'$ is the only lower cover of $p$.
 
 The upper cover case is dual.
\end{proof}
Thus, what is left to show is that there always exists a point $p\in P$ with precisely one upper or precisely one lower cover. To see this, we consider the maximal elements of $P$, since any subset of them needs to have an infimum by the following lemma:
\begin{LEM}\label{lem:infimum}
 Let $P$ be a woven poset and $M$ its set of maximal elements.
 Then every non-empty subset $M' \sub M$ has an infimum $\inf M'$ in $P$.
\end{LEM}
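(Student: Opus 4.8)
The plan is to prove this by induction on $\abs{M'}$, peeling off one maximal element at a time while carrying along a running infimum in $P$. The base case $\abs{M'}=1$ is immediate, as $\inf\{m\}=m$. What makes the induction work despite $P$ failing to be a lattice is a single auxiliary observation: for \emph{any} $z\in P$ and any $m\in M$, the pair $\{z,m\}$ has an infimum in $P$.

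To prove this observation, apply wovenness to $z$ and $m$: either they have an infimum in $P$, in which case there is nothing to do, or they have a supremum $s$ in $P$. In the latter case $z\le s$ and $m\le s$, and since $m$ is maximal this forces $m=s$, hence $z\le m$; but then $z$ itself is the infimum of $\{z,m\}$. So in both cases $\inf\{z,m\}$ exists in $P$ -- the key point being that the seemingly unhelpful alternative, obtaining a supremum instead of an infimum, is harmless precisely because one of the two elements is maximal.

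For the inductive step, take $M'$ with $\abs{M'}=k\ge 2$, fix some $m\in M'$, and set $M''\coloneqq M'\sm\{m\}$, so that $M''\sub M$ and $\abs{M''}=k-1\ge 1$. By the induction hypothesis $z\coloneqq\inf M''$ exists in $P$, and by the observation $w\coloneqq\inf\{z,m\}$ exists in $P$. I then verify $w=\inf M'$: it is a lower bound of $M'$ since $w\le z$ lies below every element of $M''$ and $w\le m$; and it is the greatest lower bound, since any lower bound $u$ of $M'$ is in particular a lower bound of $M''$, hence $u\le z$, and also $u\le m$, so $u\le\inf\{z,m\}=w$. This completes the induction. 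The only subtlety one must be careful about is that infima of pairs in a woven poset need not behave associatively, so one cannot simply combine partial infima at will; the induction must always pair the running infimum $z$ against a \emph{maximal} element $m$, which is exactly what licenses the use of the observation above. Once that is arranged there is no real obstacle.
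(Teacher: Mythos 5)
Your proof is correct and follows essentially the same route as the paper: induction on $\abs{M'}$, removing one maximal element $m$ and pairing the running infimum of the rest against $m$, where the maximality of $m$ ensures that the supremum alternative of wovenness forces comparability and hence yields an infimum anyway. Your packaging of this as a separate observation and your explicit verification of the greatest-lower-bound property are just slightly more detailed versions of the paper's argument.
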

\begin{proof}
 We proceed by induction on $\abs{M'}$.
 For the induction start $\abs{M'} = 1$ this is trivial.
 For the induction step consider $\abs{M'} \geq 2$ and let $m \in M'$ and $M'' \coloneqq M' - m$.
 By the inductive hypothesis $M''$ has an infimum $p$.
 Since $m$ is maximal there can only be a supremum of $m$ and $p$ if $m$ and $p$ are comparable. However then there also exists an infimum of $m$ and $p$ in $P$. Thus, as $P$ is woven, in any case $P$ needs to contain an infimum $q$ of $m$ and $p$.
 This $q$ lies below all of $M'$ and, conversely, every point which lies below all of $M'$ lies below both $p$ and $m$ and hence below $q$.
 Thus $q$ is the infimum of $M'$ in $P$.
\end{proof}
Given a woven poset $P$, let $M$ be the set of maximal elements of $P$. Given some subset $M'\subseteq M$ we are interested in those points $x\in P$ where, for every maximal element $m\in M$ we have $x\le m$ precisely if $m\in M'$. Let us denote as $d(M')$ the set of all these points in $P$.

Either each such set $d(M')$ just consist of at most one point, or there is some $M'$ such that $d(M')$ has size more then one. In the latter case, the following lemma guarantees that we find a point $p\in P$ with only one upper cover: 
\begin{LEM}\label{lem:max_upper}
 Let $P$ be a woven poset and $M$ the set of maximal elements of $P$. If $M'\subseteq M$ is subset-minimal with the property that $d(M')$ contains at least two points, then there is an $x\in d(M')$ for which $\inf M'$ is the only upper cover.
\end{LEM}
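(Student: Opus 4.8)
The plan is to produce the required $x$ as a well-chosen lower cover of $p^\ast:=\inf M'$, which exists by \cref{lem:infimum}. First I would record the basic structure of $d(M')$: since $\abs{d(M')}\ge 2$ it is non-empty, every $z\in d(M')$ is a lower bound of $M'$ and hence satisfies $z\le p^\ast$, so $p^\ast$ is the greatest element of $d(M')$; moreover $p^\ast\in d(M')$, because if $p^\ast\le m_0$ for some $m_0\in M\sm M'$ then a witness $z\in d(M')$ would also satisfy $z\le m_0$, contradicting $z\in d(M')$. Next I would pick some $z\in d(M')$ with $z<p^\ast$ (possible as $\abs{d(M')}\ge 2$), walk up a maximal chain inside the interval $[z,p^\ast]$, and take its penultimate element $x$. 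Then $x$ is a lower cover of $p^\ast$, it lies below all of $M'$ (as $x<p^\ast$), and since $z\le x$ it lies below no $m_0\in M\sm M'$; hence $x\in d(M')$, and $p^\ast$ is one upper cover of $x$. It remains to show it is the only one.

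So suppose $c$ is an upper cover of $x$ with $c\neq p^\ast$. If $c\le p^\ast$ then $x<c<p^\ast$ would contradict that $x$ is a lower cover of $p^\ast$, so $c\not\le p^\ast$; in particular $c$ is not a lower bound of $M'$. Consider $M_c:=\{m\in M\mid c\le m\}$. Since $x\le c$, every $m\in M_c$ satisfies $x\le m$, so $M_c\sub\{m\in M\mid x\le m\}=M'$; also $M_c\neq\emptyset$ (every element of a finite poset lies below some maximal element) and $M_c\neq M'$ (as $c$ is not below all of $M'$). The subset-minimality of $M'$ now forces $\abs{d(M_c)}\le 1$, while $c\in d(M_c)$ by the very definition of $M_c$; hence $d(M_c)=\{c\}$. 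Running the argument used above for $p^\ast$ once more, together with the observation that $c\le\inf M_c$ because $c$ is a lower bound of $M_c$, gives $\inf M_c\in d(M_c)=\{c\}$, i.e.\ $\inf M_c=c$. Finally $M_c\sub M'$ yields $c=\inf M_c\ge\inf M'=p^\ast$, and since $c\neq p^\ast$ this means $x<p^\ast<c$, contradicting that $c$ covers $x$. Thus $p^\ast$ is the unique upper cover of $x$, as required.

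I expect the only genuinely delicate step to be the passage from a hypothetical second upper cover $c$ of $x$ to the set $M_c$ of maximal elements above $c$: the inclusion $x\le c$ squeezes $M_c$ into $M'$, while $c\not\le p^\ast$ keeps $M_c$ a proper subset, so that the subset-minimality of $M'$ becomes applicable and, via \cref{lem:infimum}, collapses $d(M_c)$ to the single element $c=\inf M_c$. After that the inequality $\inf M_c\ge\inf M'$ finishes the proof at once, and everything else (the cover-chain construction of $x$ and the elementary facts about $\inf M'$ and $d(M')$) is routine order-chasing.
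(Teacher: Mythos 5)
Your proof is correct and follows essentially the same route as the paper's: choose a suitable $x\in d(M')$ that is covered by $\inf M'$, and show that any further element above $x$ lies in $d(M'')$ for some proper subset $M''\subsetneq M'$, which by the minimality of $M'$ and \cref{lem:infimum} must be the singleton $\{\inf M''\}$ with $\inf M''\ge\inf M'$, so it cannot cover $x$. The only cosmetic difference is your choice of $x$ as the penultimate element of a maximal chain below $\inf M'$, where the paper simply takes a maximal element of $d(M')-\inf M'$ (which is automatically such a lower cover).
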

\begin{proof}
 Observe that, if $d(M)\neq \emptyset$ then $\inf M'\in d(M)$. Let $x$ be a maximal element of $d(M')-\inf M'$. Since $x$ is a candidate for $\inf M'$, we have that $\inf M'$ is an upper cover of $x$. If $y$ is any point other than $\inf M'$ such that $x<y$ then $y$ lie in $d(M'')$ for some proper subset $M''$ of $M$. Thus, by our assumption, $y$ is the only element of $d(M'')$ and therefore $y=\inf M''$. However, $\inf M'\le \inf M''$ and $y\neq \inf M'$, thus $y$ is not an upper cover of $x$.
\end{proof}
It remains to consider the case where every $d(M')$ has size one. However, in that case we can find an element with only one lower cover, as shown in the following lemma:
\begin{LEM}\label{lem:unique_cover}
 Let $P$ be a woven poset.
 Then $P$ has an element which has precisely one lower or one upper cover.
\end{LEM}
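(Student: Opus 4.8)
The plan is to follow the dichotomy announced just before the lemma. Write $M$ for the set of maximal elements of $P$ and distinguish whether or not some set $d(M')$, $M'\sub M$, has at least two elements. (We may assume $\abs P\ge 2$; for $\abs P\le 1$ there is nothing to unravel and the lemma is not needed.) Suppose first that some $d(M')$ has two or more elements. Among all such $M'$ choose one that is subset-minimal; it is non-empty, so $\inf M'$ exists by \cref{lem:infimum}, and \cref{lem:max_upper} then hands us an $x\in d(M')$ whose only upper cover is $\inf M'$. That $x$ is an element of $P$ with precisely one upper cover, so we are done in this case, and all remaining work concerns the complementary case.

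So assume from now on that every $d(M')$ has at most one element. Then the map $\phi\colon P\to 2^M$ given by $\phi(p)=\{\,m\in M\mid p\le m\,\}$ is injective, because its fibres are exactly the sets $d(M')$. The crucial step is to prove that $\hat 0\coloneqq\inf M$ (which exists by \cref{lem:infimum}) is the least element of $P$. Note that $\phi(\hat 0)=M$, since $\hat 0\le m$ for every $m\in M$; hence if $z\le\hat 0$ then $\phi(z)\supseteq M=\phi(\hat 0)$, so $\phi(z)=\phi(\hat 0)$ and $z=\hat 0$ by injectivity, which means that $\hat 0$ is a minimal element of $P$. Now fix an arbitrary $x\in P$. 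Since $P$ is woven, $x$ and $\hat 0$ have a meet or a join in $P$. If $x\meet\hat 0$ exists then it lies below the minimal element $\hat 0$ and so equals $\hat 0$, whence $\hat 0=x\meet\hat 0\le x$. If instead $x\join\hat 0$ exists, then using $\phi(x\join\hat 0)=\phi(x)\cap\phi(\hat 0)$ --- valid because a maximal element lies above $x\join\hat 0$ precisely when it lies above both $x$ and $\hat 0$ --- we obtain $\phi(x\join\hat 0)=\phi(x)\cap M=\phi(x)$, so $x\join\hat 0=x$ by injectivity and again $\hat 0\le x$. Thus $\hat 0\le x$ for all $x\in P$, so $\hat 0$ is the minimum of $P$.

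With $\hat 0=\min P$ available, the conclusion follows quickly. Since $\abs P\ge 2$, the non-empty finite set $P\sm\{\hat 0\}$ has a minimal element $a$; then $\hat 0$ is the only element of $P$ strictly below $a$, since any such element other than $\hat 0$ would lie in $P\sm\{\hat 0\}$ strictly below $a$, contradicting the choice of $a$. In particular $\hat 0$ is the unique lower cover of $a$, and $a$ is the element of $P$ we sought.

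I expect the genuine obstacle to be confined to the middle paragraph, namely the claim that $\inf M$ is the bottom of $P$. Its load-bearing ingredients are the elementary identity $\phi(x\join y)=\phi(x)\cap\phi(y)$ and the injectivity of $\phi$, which is precisely the standing hypothesis of that case; wovenness enters only to ensure that $x$ and $\hat 0$ have a meet or a join at all. By contrast, the first case is a one-line invocation of \cref{lem:max_upper}, and the step from ``$P$ has a minimum'' to ``$P$ has an element with exactly one lower cover'' is the routine remark that an atom directly above the bottom element has nothing else below it. Together with \cref{lem:delete_deg1}, the lemma then supplies the inductive step showing that every woven poset can be unravelled.
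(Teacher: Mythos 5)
Your proposal is correct and follows essentially the same route as the paper: the same dichotomy over whether some $d(M')$ contains at least two points, with \cref{lem:max_upper} settling the first case and the second case reduced to showing that $\inf M$ is the least element of $P$, after which an element covering $\inf M$ has it as unique lower cover. The only divergence is the sub-step establishing that $\inf M$ is the minimum: you use injectivity of the map $p\mapsto\{m\in M\mid p\le m\}$ together with wovenness of the pair $(x,\inf M)$, whereas the paper observes that under the assumption every element of $P$ equals $\inf M'$ for the set $M'$ of maximal elements above it and hence lies above $\inf M$; both arguments are valid.
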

\begin{proof}
 Suppose the converse is true. Let $M$ be the set of maximal elements of $P$. 
 Note that every element of $P$ lies in $d(M')$ for exactly one set $M'\subseteq M$. 
 By \cref{lem:max_upper}, given any $M'\subseteq M$ there exists at most one element in $d(M')$. Moreover, by \cref{lem:infimum} we know that $\inf M'$ exists for every $M'\subseteq M$.
 
 Now if $\abs{d(M')}=1$ for some $M'\subseteq M$, then $\inf M'\in d(M')$: we know that $\inf M'$ is in $d(M'')$ for some $M''\subseteq M$ and clearly $M'\subseteq M''$, however if $ d(M')=\{v\}$, say, then clearly $v\le \inf M'$ which implies that $M''\subseteq M'$ and thus $M'=M''$. 
 
 However, since every element of $P$ lies in some $d(M')$ and $\inf M'\le \inf M''$ whenever $M''\subseteq M'$ this implies that $\inf M$ is the smallest element of $P$. However, any upper cover of this smallest element $\inf M$ has precisely one lower cover, which is a contradiction.
\end{proof}

Thus if we consider woven posets instead of woven subsets of a fixed lattice (as in \cref{sec:sandwich}) we can indeed unravel every such poset: given some woven poset $P$, by \cref{lem:unique_cover}, $P$ contains an element $p$ which has only one upper or lower cover, and, by \cref{lem:delete_deg1}, $P-p$ is again woven. Thus we obtain the following theorem:
\thmselfwoven*

Again we can translate this result to abstract separation systems as introduced in \cref{sec:history}.

Let us say that a separation system $\vS$, on its own, not in the context of a surrounding universe $\vU$ of separations, is \emph{submodular} if there exists, for any two separations $\vs,\vt\in \vS$ a supremum or an infimum in $\vS$, i.e., -- as for woven posets -- we require that there either is a smallest separation $\vr$ such that $\vs,\vt\le \vr$ or there is a largest separation $\vr$ such that $\vs,\vt\ge\vr$. These submodular separation systems are also considered in \cite{Structuresubmodular}, where we also show that one can find, for each such system $\vS$, a universe $\vU$ of separations in which we can embed $\vS$ so that the joins and meets in $\vS$ are preserved.

We now obtain the following corollary for this type of separation system:
\begin{THM}
 Let $\vS$ be a submodular separation system. Then there exists an $\vs\in \vS$ such that $\vS\sm\{\vs,\sv\}$ is again submodular.
\end{THM}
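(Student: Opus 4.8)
The plan is to regard the submodular separation system $\vS$ as the woven poset that it is (in the sense of \cref{sec:weak_unravel}) and to transfer the argument of that section while keeping track of the order-reversing involution~$^{\ast}$. The one genuinely new point is that we must delete $\vs$ together with $\sv$ and be left with a set that is again closed under~$^{\ast}$, so the goal is to find an $\vs\in\vS$ that has a unique lower cover $\vs'$ with $\vs'\neq\sv$ (the case of a unique upper cover being dual).

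First I would check that such an $\vs$ suffices. If $\vs$ has a unique lower cover $\vs'\neq\sv$, then $\vS-\vs$ is woven by \cref{lem:delete_deg1}. Applying~$^{\ast}$, the separation $\sv$ has in $\vS$ the unique upper cover $(\vs')^{\ast}$, and I would verify that $(\vs')^{\ast}$ is still its unique upper cover in $\vS-\vs$: indeed $(\vs')^{\ast}\neq\vs$ since $\vs'\neq\sv$, and $\vs$ is not an upper cover of $\sv$, because if $\sv<\vs$ then $\sv\le\vs'$ by the uniqueness of $\vs'$, so in fact $\sv<\vs'<\vs$. Hence deleting $\vs$ neither destroys nor creates a cover of $\sv$, and a second application of \cref{lem:delete_deg1} shows that $(\vS-\vs)-\sv=\vS\sm\{\vs,\sv\}$ is woven; being $^{\ast}$-closed, it is a submodular separation system. (If $\vs=\sv$, then $\vS-\vs$ is already $^{\ast}$-closed and one step suffices.)

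To produce such an $\vs$ I would re-run the proof of \cref{lem:unique_cover}, keeping the notation of \cref{sec:weak_unravel}, with $M$ the set of maximal elements of $\vS$. If every $d(M')$ has at most one element, then $\inf M$ is the bottom element $\bot$ of $\vS$; unless $\vS$ has at most two elements---a single separation, or a chain $\{\vr,\vr^{\ast}\}$ with $\vr<\vr^{\ast}$, in which case we delete the unique $^{\ast}$-orbit and reach~$\emptyset$---the element $\bot$ has an upper cover $\vt\neq\bot^{\ast}$, and then $\bot$ is the unique lower cover of $\vt$ with $\bot\neq\vt^{\ast}$, so $\vs=\vt$ works. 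Otherwise some $d(M')$ has two or more elements; choosing $M'$ minimal with this property, \cref{lem:max_upper} gives an $x$ with unique upper cover $\inf M'$, whence $x^{\ast}$ has unique lower cover $(\inf M')^{\ast}$, and if $\inf M'\neq x^{\ast}$ we take $\vs=x^{\ast}$. If $d(M')\sm\{\inf M'\}$ has more than one maximal element we may instead pick one whose inverse is not $\inf M'$ (distinct elements have distinct inverses). So the only case left open is that $d(M')\sm\{\inf M'\}$ has a unique maximal element, hence a maximum $x$, with $x^{\ast}=\inf M'$.

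This self-inverse configuration is the main obstacle: here $x\lessdot x^{\ast}=\inf M'$, every element of $d(M')$ is either $x^{\ast}$ or lies below $x$, and---by the dual of \cref{lem:max_upper}---$x$ is also the unique lower cover of $x^{\ast}$, so $\{x,x^{\ast}\}$ forms a ``neck'' whose deletion can genuinely fail (already in a six-element poset), and a different separation has to be found. I would pass to the down-set $\vS_{\le x}=\{\,p\in\vS:p\le x\,\}$, which is again a woven poset and has $x$ as its maximum: if $\vS_{\le x}=\{x\}$, then wovenness together with the neck forces $\vS=\{x,x^{\ast}\}$; otherwise $x$ has a lower cover, and \cref{lem:unique_cover} applied inside $\vS_{\le x}$ yields a lower cover $y$ of $x$ whose only upper cover within $\vS_{\le x}$ is $x$. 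Since every element lying above $y$ in $\vS$ but outside $\vS_{\le x}$ is incomparable to $x$, it remains to rule out (or otherwise handle) upper covers of $y$ incomparable to both $x$ and $x^{\ast}$; once that is settled, $x$ is the unique upper cover of $y$ in $\vS$, and $y<x<x^{\ast}$ gives $y^{\ast}\neq x$, so $\vs=y$ works by the dual of the second paragraph. I expect this last point---controlling the part of $\vS$ incomparable to the neck---to be where the real difficulty lies.
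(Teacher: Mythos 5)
Your deletion mechanism is correct and is exactly the paper's tool in the corresponding cases: if $\vs$ has a unique lower (or upper) cover distinct from $\sv$, your check that removing $\vs$ neither destroys nor creates a cover of $\sv$ is sound, and two applications of \cref{lem:delete_deg1} show that $\vS\sm\{\vs,\sv\}$ is woven, hence submodular. Your cases (i) (all $d(M')$ of size at most one, so $\inf M$ is the bottom) and (ii) with either several maximal elements of $d(M')\sm\{\inf M'\}$ or $x^*\neq\inf M'$ are also fine. The problem is that in the decisive configuration -- the ``neck'', where the minimal $M'$ with $\abs{d(M')}\ge 2$ yields a unique maximal element $x$ of $d(M')\sm\{\inf M'\}$ with $x^*=\inf M'$ -- you have no candidate for deletion: you explicitly leave open how to handle upper covers of your chosen $y<x$ that are incomparable to $x$ and $x^*$, and say you expect this to be the real difficulty. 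That is a genuine gap, and it is precisely where the theorem's proof has to do its work.

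The ingredient you are missing is the interaction of the involution with the maximal elements: for $\vs,\vt\in M$, comparing $\vs$ with $\tv$ in the woven poset $\vS$ forces $\tv\le\vs$ (a supremum would have to equal $\vs$, an infimum would have to equal $\tv$), hence $(\inf M)^*\le\inf M$. This does two things. First, it shows your bad case can only occur with $M'=M$: if $M'$ were proper, then $x=(\inf M')^*\le(\inf M)^*\le\inf M$ would put $x$ into $d(M)$ rather than $d(M')$. Second, with $M'=M$ every proper $M''$ has $d(M'')\subseteq\{\inf M''\}$ and $\inf M''\ge\inf M=x^*$, so every element of $\vS$ is comparable to the neck ($\le x$ or $\ge x^*$); the incomparable covers you worry about simply do not exist, and a maximal element $y$ strictly below $x$ (or, if there is none, a minimal element strictly above $x^*$) has a unique cover different from its inverse, so your mechanism would close the argument. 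The paper organises this regime differently: when all proper $M'$ satisfy $\abs{d(M')}\le1$ it shows every separation is nested with $\inf M$ and, for $\abs{M}\ge2$, deletes a maximal separation $\vm$ with $\mv\neq\inf M$, verifying submodularity of $\vS\sm\{\vm,\mv\}$ directly by exhibiting the alternative corner $\inf(M'\cup M'')$ via \cref{lem:infimum} -- a deletion not justified by unique covers at all -- and treats $\abs{M}=1$ much as in your bottom/top case. (A small further slip: \cref{lem:unique_cover} applied to $\vS_{\le x}$ does not give a lower cover of $x$ whose unique upper cover is $x$; you should simply take $y$ maximal among the elements strictly below $x$.) As written, however, the central case of your proposal is unresolved.
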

\begin{proof}
Observe that $\vS$ considered as a poset is woven.
 Let $M$ be the set of maximal elements of $\vS$. We note that $\vs\ge \tv$ for all $\vs,\vt\in M$. Therefore $\inf M\ge \tv$ for all $\vt\in M$ and thus $\inf M\ge \sup M^*=(\inf M)^*$. Suppose that there is a proper subset $M'$ of $M$ such that $\abs{d(M')}\ge 2$ and let $M'$ be chosen subset-minimal with that property. Let $\vx\in d(M')$ be as guaranteed by \cref{lem:max_upper}. We note that $\vx\neq (\inf M')^*$ as otherwise $\vx\le (\inf M)^*\le \inf M$, contradicting the fact that $\vx\in d(M')$. But this implies that $\vS-\vx$ is a woven poset by \cref{lem:delete_deg1}. However, $\xv$ has only one lower cover in $\vS$ and, since this cover is not $\vx$, also exactly one lower cover in $\vS-\vx$. Thus, again by \cref{lem:delete_deg1}, also $((\vS-\vx)-\xv)$ is a woven poset and thus $S-x$ is a submodular separation system.
 
 Hence we may suppose that $\abs{d(M')}\le 1$ for all proper subset $M'$ of $M$. This implies that every element $\vs\in \vS$ is nested with $\inf M$: if $\vs\in d(M)$ then $\vs\le \inf M$ and if $\vs\in d(M')$ for a proper subset $M'$ of $M$, then $\vs=\inf M'\ge \inf M$. Now suppose that $\abs{M}\ge 2$. Then there is a $\vm\in M$ such that $\mv\neq \inf M$. We claim that $\vS\sm \{\vm,\mv\}$ is again submodular. To see this suppose that, for some $\vx,\vy\in \vS$, we have that $\vx\join \vy=\vm$ (the case $\vx\meet\vy=\mv$ is dual). As $\vx$ and $\vy$ are nested with $\inf M$ this implies that $\vx,\vy\ge \inf M$ as $\vx\le \inf M$ would imply that $\vx\join \vy=\vy$ or $\vx\join \vy\le \inf M$. Thus $\vx=\inf M'$ and $\vy=\inf M''$ for subsets $M',M''$ of $M$, say. Thus $\inf (M'\cup M'')$, which exists by \cref{lem:infimum},
 is also the infimum of $\vx$ and $\vy$. Moreover, since $\mv\neq \inf M$ and $\mv$ is a minimal element of $\vS$ and $\inf (M'\cup M'')\ge \inf M$ we have that $\inf (M'\cup M'')\neq \mv$ and thus there is a corner of $\vx$ and $\vy$ in $\vS\sm \{\vm,\mv\}$.
 
 It remains the case that $\abs{M}=1$, say $M=\{\vm\}$. In this case however, we have that $\vs\le \vm$ for every $\vs\in \vS$. If $\vS=\{\vm,\mv\}$ the statement is trivial, so let $\vs\in \vS-\vm$ be $\le$-maximal such that $\vs\neq\mv$. Such an $\vs$ exists as $\mv$ is a $\le$-minimal element of $\vS$. Then $\vm$ is the unique upper-cover of $\vs$. Thus $\vS-\vs$ is a woven poset by \cref{lem:delete_deg1}. Moreover, $\mv$ is the unique lower cover of $\sv$ and, since $\mv\neq \vs$ it is also the unique lower cover of $\sv$ in $\vS-\vs$. Thus $(\vS-\vs)-\sv$ is a woven poset by \cref{lem:delete_deg1}, and thus $S-s$ is a submodular separation system.
\end{proof}

The Dedekind-MacNeille completion of posets \cite{LatticeBook} allows us to embed every woven poset into a lattice so that the poset is woven in this lattice.
We show in \cite{Structuresubmodular} that this technique can also be applied to submodular separation systems to obtain a universe of separations in which the separation system is submodular.

In particular, if $P$ is a woven poset and $p\in P$ such that $P'=P-p$ is again woven, there are lattices $L$ and $L'$ such that $P$ is woven in $L$ and $P'$ is woven in $L'$.
If we could arrange for these two lattices to be sublattices of one another, $L' \subseteq L$, in such a way that  every element of $P' \subseteq L'$ is mapped to the corresponding element of $P \subseteq L$, then this would imply that $P$ could be unravelled as a woven subset of $L$ in the sense of \cref{prop:woven_lattice}.

The way in which we constructed $P'$, however, makes this almost impossible.
We choose $p$ as an element with a unique upper, or a unique lower cover.
Now if $p\in P$ has a unique upper cover $q$, say, and is also the supremum of some two points $r,s\in P \sm \{p\}$, then the Dedekind-MacNeille completion $L'$ of $P'$  cannot be embedded in the way outlined above into the Dedekind-MacNeille completion $L$ of $P$: in $L'$, the images of $r$ and $s$ have the image of $q$ as supremum and an embedding as a sublattice would need to preserve this property, but the images of $r$ and $s$ in $L$ have the image of $p$ as their supremum. (However, $L'$ \emph{is} order-isomorphic to a subposet of $L$.)

\bibliography{JMC}

\medskip
\noindent
\begin{minipage}{\linewidth}
 \raggedright\small
   \textbf{Christian Elbracht},
   \texttt{christian.elbracht@uni-hamburg.de}

   \textbf{Jakob Kneip},
   \texttt{jakob.kneip@uni-hamburg.de}
   
   \textbf{Maximilian Teegen},
   \texttt{maximilian.teegen@uni-hamburg.de}

   Universität Hamburg,
   Hamburg, Germany
\end{minipage}
\end{document}